\pgfplotsset{compat=1.10}
\newtheorem{theorem}{Theorem}[section]
\newtheorem*{theorem*}{Main result}
\newtheorem{corollary}[theorem]{Corollary}
\newtheorem{lemma}[theorem]{Lemma}
\newtheorem{proposition}[theorem]{Proposition}
\newtheorem{remark}[theorem]{Remark}
\newcommand{\R}{\mathbb{R}}
\newcommand{\N}{\mathbb{N}}
\newcommand{\Z}{\mathbb{Z}}
\newcommand{\I}{\mathrm{i}}
\newcommand{\rd}{\mathrm{d}}
\definecolor{cadmiumgreen}{rgb}{0.0, 0.42, 0.24}
\numberwithin{equation}{section}
\numberwithin{figure}{section}
\begin{document}
%%%%%%%%%%%%%%%%
%%%%%%%%%%%%%%%%

%%%%%%%%%%%%%%%%
\title[Solitary waves to the Whitham equation]{A direct construction of a full family of Whitham solitary waves}
%%%%%%%%%%%%%%%%

%%%%%%%%%%%%%%%%
\author{Mats Ehrnstr\"om}
\address{Department of Mathematical Sciences \\ Norwegian University of Science and Technology \\ NO--7491 Trondheim, Norway}
\email{mats.ehrnstrom@ntnu.no}
\author{Katerina Nik}
\address{Faculty of Mathematics\\ University of Vienna \\ Oskar-Morgenstern-Platz 1 \\ A--1090 Vienna\\ Austria}
\email{katerina.nik@univie.ac.at}
\author{Christoph Walker}
\address{Leibniz Universit\"at Hannover\\ Institut f\" ur Angewandte Mathematik \\ Welfengarten 1 \\ D--30167 Hannover\\ Germany}
\email{walker@ifam.uni-hannover.de}
%%%%%%%%%%%%%%%%
%
\thanks{ME acknowledge the support by grant no. 250070 from the Research Council of Norway. KN is partially supported by the Austrian Science Fund (FWF) project  F\,65.}
\date{\today}
\keywords{}
%
%\subjclass[2010]{}
%
%
%%%%%%%%%%%%%%%%
%%%%%%%%%%%%%%%%
\begin{abstract} Starting with the periodic waves earlier constructed for the gravity Whitham equation, we parameterise the solution curves through relative wave height, and use a limiting argument to obtain a full family of solitary waves. The resulting branch starts from the zero solution, traverses unique points in the wave speed–wave height space, and reaches a singular highest wave at \(\varphi(0) = \frac{\mu}{2}\). The construction is based on uniform estimates improved from earlier work on periodic waves for the same equation, together with limiting arguments and a Galilean transform to exclude vanishing waves and waves levelling off at negative surface depth. In fact, the periodic waves can be proved to converge locally uniformly to a wave with negative tails, which is then transformed to the desired branch of solutions. The paper also contains some proof concerning uniqueness and continuity for signed solutions (improved touching lemma).
\end{abstract}
%%%%%%%%%%%%%%%%
%%%%%%%%%%%%%%%%
%
\maketitle
%
%%%%%%%%%%%%%%%%
%%%%%%%%%%%%%%%%

\section{Introduction} \label{sec:intro}
\noindent In this remark we construct solitary waves to the nonlinear and nonlocal Whitham equation
\begin{equation*}
	u_t + (Lu+u^2)_x=0. 
\end{equation*}
The real-valued function $u=u(t,x)$ describes the deflection of a water fluid surface from the rest 
position at time $t\geq 0$ and position $x \in \mathbb{R}$, and $L\colon f \mapsto K \ast f$ denotes convolution with the kernel $K$ given by 
\begin{equation*}
	K(x) = \frac{1}{2 \pi} \int_{\mathbb{R}} m(\xi) \exp(ix\xi) \, \rd \xi, \quad \text{ with } \; 
	m(\xi) = \sqrt{\frac{\text{tanh}(\xi)}{\xi}}. 
\end{equation*}
Starting with \cite{EK08} this equation has been extensively studied recently, not the least because of its solutions' interesting qualitative properties, and the features of wave breaking, solitary waves, and highest waves in a single scalar model equation (see below). The Whitham equation is one of the simplest examples of an inherently nonlocal and nonlinear dispersive equation \cite{MR3188389}, which gains its modelling strength from the linear dispersion relation of the finite-depth gravity Euler equations \cite{MR3060183}, where \(m(\xi)\) describes the speed of a  right-propagating steady wave train of frequency \(\xi\). 

The Whitham equation has features of small waves (KdV-type bifurcation of periodic \cite{EK08} and solitary waves \cite{EGW11,MR4072387}, modulational instabilities \cite{MR3298879}, prolonged existence time \cite{EW20whitham}, improved modelling \cite{MR3763731, MR4321411}) and of 'large' waves (symmetry and nodal properties \cite{BP21}, wave breaking \cite{MR3682673}, and highest waves \cite{EW19,TWW20}). A number of additional qualities are known, including exponential decay \cite{BEP16} and experimental modelling properties~\cite{MR3430140}. The difficulties in this equation arise because of the mixing of local and nonlocal inhomogeneous terms, which make precise estimates very demanding, and the coupling between parameters such as wave speed and wave height to the solution itself remains implicit. 

For travelling waves $u(t,x)=\varphi(x-\mu t)$, with $\mu>0$ the sought wave speed, more is known than for the initial-value problem. By integrating once and making use of a Galilean invariance
(see \eqref{eq:galilean} below), the Whitham equation reduces to the integral equation 
\begin{equation}
\label{steadyW}
- \mu \varphi + L \varphi + \varphi^2=0,
\end{equation}
where one seeks functions $\varphi\colon \mathbb{R} \rightarrow \mathbb{R}$ satisfying \eqref{steadyW} pointwise in $\mathbb{R}$. We shall refer to these as solutions of \eqref{steadyW} with 
wave speed $\mu$. In this case, it is known that each period allows a global curve of smooth solutions of bell-shaped form, bifurcating off from the line of zero solutions and stretching continuously (or better) to a so-called highest wave of wave height \(\varphi(0) = \frac{\mu}{2}\) above an undisturbed surface, and of optimal regularity \(C^{1/2}(\R)\) \cite{EW19}. For the solitary case, when \(\varphi(x) \to 0\) as \(x \to \pm \infty\), the first result was in \cite{EGW11} and features a quite advanced construction of small solitary solutions for that and many other equations of negative-order dispersion with the aid of minimisation. This has been generalised in \cite{MR4072387}. Later, proofs based on the implicit function theorem \cite{MR4061635} and centre manifolds \cite{TWW20} have been suggested, and recently Truong, Wahlén and Wheeler used the latter to prove and extend the branch of small solutions through global bifurcation in weighted Sobolev spaces, all the way up to the highest wave. The estimates for the highest waves are the same in the periodic and solitary case, see \cite{EMV21,EW19}. We would like to mention also \cite{JTW21} and \cite{MR4057934}, which although they are for the positively dispersive capillary-gravity Whitham equation, make use of similar techniques as the aforementioned papers. Global estimates of the kind used in our investigation are so far not known in the case of surface tension (in this or other equations with non-explicit solutions). 

In the current paper we give an alternative, less sophisticated but straightforward approach to the same problem, by making use of the theory for periodic waves. While it is a common procedure in dynamical systems to obtain homo- and heteroclinic orbits as limits of periodic orbits, this procedure in water waves is most commonly associated with small-amplitude solutions (see, e.g., \cite{MR766131,Buffoni04a}), whereas we carry it out for the full range of solutions. Our method is constructive, picking periodic waves which can be found analytically as well as numerically, and then obtaining point-wise convergence as the period tends to infinity. While the limiting process eliminates some of the regularity properties of the bifurcation curves, this is compensated for by the method's simplicity and that we may choose the relative heights \(\lambda = \max \varphi/(\mu/2)\) of the initial waves, yielding a family of unique solitary solutions \((\varphi,\mu)\) reaching the highest wave at \(\max \varphi = \frac{\mu}{2}\). Although periodic limiting sequences appear in both \cite{EGW11} and \cite{MR4072387}, the method in the current paper is arguably less pricey and yields a direct convergence with known crest and tails in terms of the wave speed. As has to be, see  \cite{BEP16}, our solutions are symmetric and monotonically exponentially decaying. In Sections~\ref{sec:preliminaries} and~\ref{sec:properties} we give the necessary background and properties of periodic and solitary solutions, whereas in Section~\ref{sec:construction} the actual construction takes place. The main result can be summarised as follows (see Theorem~\ref{thm:main}, Corollary~\ref{cor:galilean}, Propositions~\ref{prop:injective} and \ref{prop:upperbound}).

\begin{theorem*}
Pick \(\lambda \in (0,1]\). The steady Whitham equation~\eqref{steadyW} allows for a sequence of \(P\)-periodic solutions \((\varphi_P,\mu_P)\) of height \(\varphi_P(0) = \lambda \frac{\mu}{2}\) converging pointwise and locally uniformly as \(P \to \infty\)  to a symmetric solution of the same relative wave height \(\varphi_P(0) = \lambda \frac{\mu}{2}\), that decays monotonically on either side of the origin. After a Galilean transformation, this family describes an injective branch \(\lambda \mapsto (\varphi_\lambda, \mu_\lambda) \in C(\R) \times \R\) of solitary solutions to \eqref{steadyW}, which have supercritical wave speed \(\mu \in (1,2)\), are exponentially decaying and reach the highest wave of optimal \(C^{1/2}\)-regularity at \(\lambda = 1\). The speed \(\mu\) may be bounded in terms of the relative height of solutions, converges to \(1\) from above as \(\lambda \to 0\), and all solutions are smooth except for the highest at \(\lambda =1\). 
\end{theorem*}

\noindent Further properties of the solutions, including a slightly improved touching lemma in Proposition~\ref{prop:touching}, are presented in Section~\ref{sec:construction}.

%%%%%%%%%%%%%%%%
%%%%%%%%%%%%%%%%

\section{Preliminaries} \label{sec:preliminaries}
\noindent We give in this short section the basic background on the convolution kernel \(K\) and its properties. These properties were established in \cite{EW19}. The kernel $K$ is given by 
\begin{equation*}
	K(x) = (\mathcal{F}^{-1}m)(x) = \frac{1}{2 \pi} \int_{\mathbb{R}} m(\xi) \exp(ix\xi) \, \rd \xi
\end{equation*}
with Whitham symbol 
\begin{equation*}
	m(\xi) = \sqrt{\frac{\text{tanh}(\xi)}{\xi}}= 1-\frac{1}{6}\xi^2+ \mathcal{O}(\xi^4), \qquad \text{ as }  |\xi| \to 0.
\end{equation*}
Let \(\N = \{0,1,2, \ldots\}\).

%%%%%%%%%%%%%%%%
\begin{lemma}	\label{propertiesWK}
The kernel $K$ is even and completely monotone on $(0,\infty)$, that is,
\[
 {\textstyle \left(-\frac{d}{dx}\right)^j} K(x) > 0, \qquad x > 0, \quad j \in \mathbb{N}.
\]
In particular, $K$ is positive, and strictly decreasing and convex for $x > 0$. It integrates to unity,
\begin{equation*}
	\Vert K \Vert_{L^1(\mathbb{R})} =1,
\end{equation*}
and furthermore satisfies the asymptotic estimates that:\\

\begin{itemize}
\item[(i)] For any given $s_0 \in (0,\pi/2)$ and $n \in \mathbb{N}$,
\[
\vert D_x^n K(x) \vert \lesssim  \exp(-s_0 \vert x\vert), \qquad \vert x \vert \geq \tfrac{1}{2}.\\[10pt]
\]

\item[(ii)] With $K_{\textnormal{reg}}$ infinitely differentiable on $\mathbb{R}$,
\[
K(x) = \frac{1}{\sqrt{2 \pi \vert x\vert}} + K_{\textnormal{reg}}(x), \qquad x \neq 0.\\[10pt]
\]
\end{itemize}
\end{lemma}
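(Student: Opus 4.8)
The plan is to derive all the assertions from a single structural fact: that the kernel admits a Laplace (Bernstein) representation $K(x)=\int_{\pi/2}^{\infty}e^{-tx}\,\rd\nu(t)$ for $x>0$ with a nonnegative measure $\nu$. Complete monotonicity is then immediate, and the ``in particular'' claims are just its cases $j=0,1,2$ (positivity, strict decrease, convexity). Evenness is clear at the outset, since $m$ is even (the quotient $\tanh(\xi)/\xi$ is even), so $K=\mathcal{F}^{-1}m$ is real and even. Granting $K\in L^1(\R)$ — which follows a posteriori from (i) and (ii) — together with $K\ge 0$, the normalisation is Fourier inversion at a point, $\|K\|_{L^1(\R)}=\int_\R K=(\mathcal{F}K)(0)=m(0)=1$, using $\lim_{\xi\to0}\tanh(\xi)/\xi=1$.

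The heart of the matter is the Laplace representation, which I would obtain through the theory of Stieltjes functions. A sufficient (in fact necessary) condition for $K$ to be completely monotone on $(0,\infty)$ is that $u\mapsto m(\sqrt u)$ be a Stieltjes function: if $m(\xi)=\int_0^\infty(\xi^2+s)^{-1}\,\rd\sigma(s)$ with $\sigma\ge0$, then, since $\mathcal{F}^{-1}\big[(\xi^2+s)^{-1}\big](x)=\tfrac{1}{2\sqrt s}e^{-\sqrt s|x|}$, Fubini gives $K(x)=\int_0^\infty\tfrac{1}{2\sqrt s}e^{-\sqrt s\,x}\,\rd\sigma(s)$ for $x>0$, a Laplace transform of a nonnegative measure. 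To verify the Stieltjes property I would start from the Mittag--Leffler expansion
\[
\frac{\tanh\xi}{\xi}=\sum_{k=0}^{\infty}\frac{2}{\xi^2+(k+\tfrac12)^2\pi^2},
\]
which exhibits $u\mapsto \tanh(\sqrt u)/\sqrt u=\sum_k 2\big(u+(k+\tfrac12)^2\pi^2\big)^{-1}$ as a positive superposition of the elementary Stieltjes functions $(u+a)^{-1}$, hence Stieltjes with representing measure $\sum_k 2\,\delta_{(k+1/2)^2\pi^2}$. It then remains to pass to the square root $m(\sqrt u)$, using the stability of the Stieltjes cone under $f\mapsto f^{\alpha}$, $\alpha\in[0,1]$: a Stieltjes function maps the upper half-plane into the closed lower half-plane and is positive on $(0,\infty)$, so its principal square root has argument in $[-\pi/2,0]$ there and retains both properties. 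Strictness of each $(-\rd/\rd x)^jK>0$ comes from $\nu$ not being a point mass, so that $(-1)^jK^{(j)}(x)=\int t^je^{-tx}\,\rd\nu(t)>0$.

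For the decay (i) I would read off the support of $\nu$ from the analyticity of $m(\sqrt u)$. The singularities of $u\mapsto\tanh(\sqrt u)/\sqrt u$ lie at $u=-(k+\tfrac12)^2\pi^2$ (poles of $\tanh$) and $u=-k^2\pi^2$ (zeros), all in $(-\infty,-\pi^2/4]$, while on $(-\pi^2/4,\infty)$ the function is real, positive and analytic; hence $m(\sqrt u)$ extends analytically to $\mathbb{C}\setminus(-\infty,-\pi^2/4]$ and its representing measure lives in $[\pi^2/4,\infty)$. Translating back, $\nu$ is supported in $[\pi/2,\infty)$, the threshold fixed by the first pole $\xi=\I\pi/2$ of $\tanh$. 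Then $D_x^nK(x)=\int_{\pi/2}^\infty(-t)^ne^{-tx}\,\rd\nu(t)$, and for $x\ge\tfrac12$ and any $s_0<\pi/2$ I factor $e^{-tx}=e^{-s_0x}e^{-(t-s_0)x}\le e^{-s_0x}e^{-(t-s_0)/2}$ with $t-s_0>0$, leaving the finite constant $\int_{\pi/2}^\infty t^ne^{-(t-s_0)/2}\,\rd\nu(t)=(-1)^nK^{(n)}(\tfrac12)\,e^{s_0/2}$; evenness yields $|D_x^nK(x)|\lesssim e^{-s_0|x|}$ for $|x|\ge\tfrac12$.

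Finally, the singular expansion (ii) is governed by the large-frequency asymptotics of the symbol. Since $\tanh\xi=1+\mathcal{O}(e^{-2\xi})$ as $\xi\to+\infty$, one has $m(\xi)=|\xi|^{-1/2}\big(1+\mathcal{O}(e^{-2|\xi|})\big)$, so for a smooth cut-off $\chi$ vanishing near the origin and equal to $1$ for large $|\xi|$, the difference $m(\xi)-\chi(\xi)|\xi|^{-1/2}$ and all its derivatives decay exponentially, while near the origin it coincides with the smooth even function $m$; its inverse transform is therefore $C^\infty$. Combining this with $\mathcal{F}^{-1}\big[|\xi|^{-1/2}\big](x)=(2\pi|x|)^{-1/2}$ and the fact that $(\chi-1)|\xi|^{-1/2}$ is compactly supported and integrable (hence has an entire transform) gives $K(x)=(2\pi|x|)^{-1/2}+K_{\mathrm{reg}}(x)$ with $K_{\mathrm{reg}}\in C^\infty(\R)$. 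I expect the genuine obstacle to be the complete-monotonicity step: every qualitative conclusion hinges on the Stieltjes property of $m(\sqrt{\,\cdot\,})$, and the one nontrivial input there is confirming that taking the square root does not leave the Stieltjes class — via the half-plane mapping property, or equivalently that $1/f$ is a complete Bernstein function and fractional powers in $[0,1]$ preserve that class. Everything else, including the precise rate $\pi/2$ and the exact coefficient $(2\pi|x|)^{-1/2}$, then follows from the pole locations of $\tanh$ and the leading order $|\xi|^{-1/2}$ of $m$ at infinity.
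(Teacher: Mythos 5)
Your proposal is correct and takes essentially the same route as the paper's own proof, which is simply a deferral to \cite[Section 2]{EW19}: there, too, complete monotonicity is obtained from the Stieltjes property of \(u\mapsto m(\sqrt{u})\) (via the Mittag--Leffler expansion of \(\tanh\) and the stability of the Stieltjes class under fractional powers in \([0,1]\)), the decay rate \(s_0<\pi/2\) is traced to the first pole of \(\tanh\) at \(\I\pi/2\) through the support of the representing measure, and the \((2\pi|x|)^{-1/2}\) singularity comes from splitting off the leading term \(|\xi|^{-1/2}\) of the symbol with a cut-off. Apart from the routine Fubini/distributional justifications you leave implicit (note \(m\notin L^1(\R)\), so the interchange should be checked against test functions using \(\int s^{-1}\,\rd\sigma(s)<\infty\)), nothing is missing.
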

%%%%%%%%%%%%%%%%

\noindent For the proof of Lemma~\ref{propertiesWK}, see \cite[Section 2]{EW19}. We introduce now the \emph{periodised kernel}
\begin{equation}
\label{periodisedWK}
	K_P(x)= \sum_{n\in \mathbb{Z}} K(x+nP)
\end{equation}
for $P\in (0, \infty)$, with the convention $K_{\infty}= K$ if $P=\infty$. By property (i) in Lemma \ref{propertiesWK} this sum is absolutely convergent. $K_P$ can equivalently be written as a Fourier series, 
\begin{equation*}
	K_P(x) = \frac{1}{P} \sum_{n\in \mathbb{Z}} m\Big(  \frac{2 \pi n}{P}\Big) \exp\Big(  \frac{2 \pi \I nx}{P}\Big) .
\end{equation*}

%%%%%%%%%%%%%%%%
\begin{lemma}\label{propertiesPWK}
The kernel $K_P$ is completely monotone on $(0,P/2)$. Moreover,
\begin{itemize}
\item[(i)]
	 $K_P$ is $P$-periodic, even, positive, strictly decreasing on $(0,P/2)$ and convex on  $(0,P)$. \\[-0.2cm]
\item[(ii)]	
$K_P \in C^{\infty}(\mathbb{R}\backslash P\mathbb{Z} )$. \\[-0.2cm]
\item[(iii)]	
$K_P$ satisfies 
	\begin{equation*}
		K_P(x) = \frac{1}{\sqrt{2 \pi \vert x\vert}} + K_{P,\textnormal{reg}}(x), 
	\end{equation*}
	where $K_{P,\textnormal{reg}}$ is infinitely differentiable on $(-P,P)$. 	
\end{itemize}
\end{lemma}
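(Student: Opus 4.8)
The plan is to reduce every assertion to the completely monotone structure of $K$ supplied by Lemma~\ref{propertiesWK}, together with the exponential decay~(i) and singular decomposition~(ii) of that lemma. Periodicity and evenness are immediate from the defining series~\eqref{periodisedWK}: the shift $x\mapsto x+P$ reindexes $n\mapsto n+1$, and $x\mapsto -x$ reindexes $n\mapsto -n$ while using that $K$ is even, both rearrangements being licit by the absolute convergence granted by~(i). Positivity follows since every summand $K(x+nP)$ is positive away from its singularity and at most one summand is singular on each period. Combined with periodicity and evenness one also obtains the reflection symmetry $K_P(P-x)=K_P(x)$, which is what upgrades statements on $(0,P/2)$ to the full period.

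The analytic core is complete monotonicity, and here term-by-term differentiation fails: writing $K_P(x)=K(x)+\sum_{n\ge 1}[K(nP+x)+K(nP-x)]$ for $x\in(0,P/2)$, the reflected terms $K(nP-x)$ are not completely monotone in $x$, so the naive sum has no definite sign. To bypass this I would invoke Bernstein's theorem: since $K$ is completely monotone on $(0,\infty)$, there is a positive measure $\nu$ with $K(y)=\int_0^\infty e^{-yt}\,d\nu(t)$ for $y>0$, and the exponential decay~(i) forces $\operatorname{supp}\nu\subseteq[\pi/2,\infty)$, in particular bounded away from $0$. Substituting this into the series and summing the geometric series $\sum_{n\ge 1}e^{-nPt}=(e^{Pt}-1)^{-1}$ collapses the kernel to the closed form
\begin{equation*}
K_P(x)=\int_0^\infty \frac{\cosh\!\big((\tfrac{P}{2}-x)t\big)}{\sinh(\tfrac{P}{2}t)}\,d\nu(t),\qquad x\in(0,P),
\end{equation*}
where the support condition on $\nu$ makes the interchange of sum and integral and all convergence manifest.

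From this representation everything on $(0,P)$ drops out by differentiating under the integral sign. For $x\in(0,P/2)$ one has $(\tfrac{P}{2}-x)t>0$, and $(-1)^j\partial_x^j[\cosh((\tfrac P2-x)t)]$ equals $t^j\cosh((\tfrac P2-x)t)$ or $t^j\sinh((\tfrac P2-x)t)$ according to the parity of $j$, both strictly positive; integrating against the nonnegative density $d\nu/\sinh(\tfrac P2 t)$ yields $(-1)^jK_P^{(j)}(x)>0$, i.e.\ complete monotonicity on $(0,P/2)$, with the case $j=1$ giving strict decrease. For convexity I note that $\partial_x^2$ of the integrand equals $t^2\cosh((\tfrac P2-x)t)/\sinh(\tfrac P2 t)$, which is positive for \emph{all} $x\in(0,P)$ since $\cosh$ never vanishes; hence $K_P''>0$ on $(0,P)$, completing~(i).

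For the regularity statements I would return to the series. Given a compact $A\subset\mathbb{R}\setminus P\mathbb{Z}$ one has $\operatorname{dist}(A,P\mathbb{Z})>0$, so every argument $x+nP$ stays away from the origin and each summand is smooth by~(ii), while~(i) bounds all derivatives of the tail by a convergent geometric series; thus $K_P\in C^\infty(\mathbb{R}\setminus P\mathbb{Z})$, which also retroactively justifies the term-by-term differentiations used above. Finally, for~(iii) I would isolate the single singular term on $(-P,P)$: writing $K=\tfrac{1}{\sqrt{2\pi|x|}}+K_{\mathrm{reg}}$ and setting $K_{P,\mathrm{reg}}:=K_{\mathrm{reg}}+\sum_{n\ne 0}K(\,\cdot\,+nP)$, the terms with $n\ne 0$ have arguments bounded away from $0$ on $(-P,P)$ and sum to a $C^\infty$ function by~(i), while $K_{\mathrm{reg}}$ is smooth by~(ii), giving the claimed decomposition. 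The main obstacle throughout is the complete monotonicity on $(0,P/2)$: this is exactly where periodisation destroys the naive sign structure, and the Bernstein representation with its closed-form summation is what rescues it.
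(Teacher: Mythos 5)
Your proof is correct. The paper itself gives no proof of this lemma but defers to \cite[Section~3]{EW19}, and your argument --- Bernstein's theorem applied to the completely monotone kernel $K$, summation of the geometric series to obtain the representation $K_P(x)=\int_0^\infty \cosh\big((\tfrac P2-x)t\big)/\sinh(\tfrac P2 t)\,\rd\nu(t)$, and differentiation under the integral sign, with the remaining points handled termwise via Lemma~\ref{propertiesWK}~(i)--(ii) --- is essentially the proof given there.
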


\noindent For a proof, see \cite[Section 3]{EW19}. As in the introduction, we have the linear operator 
\begin{equation*}
	L\colon f \mapsto K \ast f,
\end{equation*}
which we define on the space of tempered distributions $\mathcal{S}'(\mathbb{R})$ via duality from Schwartz space \(\mathcal{S}(\R)\). One sees from \eqref{periodisedWK} that for a continuous periodic function $f$, the operator $L$ 
is given by 
\begin{equation*}
	\int_{-P/2}^{P/2} K_P(x-y) f(y) \, \rd y,
\end{equation*}
and more generally by 
\begin{equation*}
\int_{\mathbb{R}} K(x-y) f(y) \, \rd y
\end{equation*}
for $f$ bounded and continuous. Since the Whitham symbol $m$ is a classical symbol of order~\(-1/2\), that is,
\begin{equation*}
	\vert D_{\xi}^n m(\xi) \vert \leq C_n (1+ \vert \xi \vert )^{-1/2-n}, \qquad n \in \mathbb{N}, 
\end{equation*}
for some positive constant $C_n$ depending on $n$, the linear operator
\begin{equation}
	\label{bL}
	L\colon B_{p,q}^s (\mathbb{R}) \rightarrow B_{p,q}^{s+1/2} (\mathbb{R})
\end{equation}
is bounded \cite{BCD11}, where $B_{p,q}^s (\mathbb{R})$ denote the Besov spaces with $s \in \R$ $p,q \in [1,\infty]$; and therefore in particular on the Zygmund spaces \(\mathcal{C}^s = B^s_{\infty,\infty}\), coinciding with the H\"older spaces \(C^s\) for \(s \not\in \N\). It is bounded in a similar way on the Sobolev spaces, but we shall not use that here.

%%%%%%%%%%%%%%%%
%%%%%%%%%%%%%%%%
\section{Properties of solutions}\label{sec:properties}
\noindent In this section we give some a priori properties of solutions to \eqref{steadyW}. These are especially needed when taking \(P \to \infty\) in the next section. Note that the assumptions in Proposition~\ref{prop2} differ from those in for example \cite{EW19, BEP16}; and they do not require a solitary wave. Corollary~\ref{cor5} is needed in the next section to exclude degeneration of the waves, and is based on \cite{BFRW97}. It appeared also in \cite{TWW20}.

%%%%%%%%%%%%%%%%
\begin{proposition}
	\label{prop1}
If $\varphi \in L^{\infty}(\mathbb{R})$ is a solution of \eqref{steadyW} with wave speed $\mu \geq 0$ and 
\[
\lim_{x \rightarrow \infty} \varphi(x) = \Phi, 
\]
then $\Phi$ solves \eqref{steadyW} with the same wave speed. In particular, $\Phi$ is either $\mu-1$ or $0$. 
\end{proposition}
%%%%%%%%%%%%%%%%

\begin{proof}
By assumption and since $\Vert K \Vert_{L^1(\mathbb{R})} =1$ with \(K \geq 0\), we have 
\begin{equation*}
	\lim_{x\rightarrow \infty} L\varphi(x) = 	\lim_{x\rightarrow \infty} \int_{\mathbb{R}} K(y) \varphi(x-y) \, \rd y= \Phi,  
\end{equation*}
so that $\Phi$ is a constant solution to \eqref{steadyW} with wave speed $\mu$, that is, $\Phi$ solves $(\mu-1) \Phi= \Phi^2$. Therefore either $\Phi= \mu -1$ or $\Phi=0$. 
\end{proof}

%%%%%%%%%%%%%%%%
\begin{proposition}\label{prop2}
Let $\varphi \in L^{\infty}(\mathbb{R})$ be a solution of \eqref{steadyW} with wave speed $\mu \geq 0$. If $\varphi$ is even, nonnegative, nonconstant, and nondecreasing on $(-\infty, 0)$, then $\mu \geq 1$. 
\end{proposition}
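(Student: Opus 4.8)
The plan is to argue by contradiction, first using Proposition~\ref{prop1} to pin down the value of $\varphi$ at infinity and then an integrated form of \eqref{steadyW} to exclude $\mu<1$.

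First I would record that, since $\varphi$ is even and nondecreasing on $(-\infty,0)$, it is nonincreasing on $(0,\infty)$ and, being bounded, admits a limit $\Phi:=\lim_{x\to\infty}\varphi(x)=\inf_{(0,\infty)}\varphi\geq 0$. Suppose, for contradiction, that $\mu<1$. By Proposition~\ref{prop1} we have $\Phi\in\{0,\mu-1\}$; since $\mu-1<0\leq\Phi$, necessarily $\Phi=0$, so $\varphi(x)\to 0$ as $x\to\pm\infty$.

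Next I would integrate \eqref{steadyW} over $(-R,R)$. Writing $I(R):=\int_{-R}^R\varphi$ and using $\Vert K\Vert_{L^1(\mathbb{R})}=1$ to subtract $I(R)=\int_{\mathbb{R}}K(y)I(R)\,\rd y$, Tonelli's theorem yields
\[
(\mu-1)I(R)=\int_{-R}^R \varphi^2\,\rd x + E(R), \qquad E(R):=\int_{\mathbb{R}}K(y)\Big(\int_{-R-y}^{R-y}\varphi-\int_{-R}^R\varphi\Big)\rd y.
\]
The crux is to show $E(R)\to 0$ as $R\to\infty$. Using evenness of $K$ and $\varphi$, the inner bracket for $y>0$ equals $\int_R^{R+y}\varphi-\int_{R-y}^R\varphi$, which by monotonicity is bounded in modulus by $2y\,\varphi(\max(R-y,0))$; the weight $y\,K(y)$ is integrable on $(0,\infty)$ since $K(y)\sim(2\pi y)^{-1/2}$ near the origin and decays exponentially by Lemma~\ref{propertiesWK}(i). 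Dominated convergence, with $\varphi(R-y)\to 0$ pointwise, then gives the contribution from $y\lesssim R$, while the region $y\gtrsim R$ is controlled separately by the exponential decay of $K$. This is the main obstacle: it says precisely that the nonlocal term $\int_{-R}^R K*\varphi$ differs from $\int_{-R}^R\varphi$ only by a vanishing boundary contribution.

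Finally I would conclude as follows. Under the assumption $\mu<1$, the left-hand side $(\mu-1)I(R)$ is $\leq 0$ for every $R$, since $I(R)\geq 0$. On the other hand, the right-hand side tends to $\int_{\mathbb{R}}\varphi^2>0$ — strictly positive because $\varphi$ is nonconstant and nonnegative, hence positive on a set of positive measure — plus the term $E(R)\to 0$, so it is eventually strictly positive. This contradiction forces $\mu\geq 1$. Apart from the estimate $E(R)\to 0$, every step is routine bookkeeping.
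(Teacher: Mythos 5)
Your proof is correct, but it takes a genuinely different route from the paper's. The paper also begins by invoking Proposition~\ref{prop1} to get $\varphi(\pm\infty)=0$, but then splits into a dichotomy on the decay rate of $\varphi$: either there is a sequence $x_n\to-\infty$ along which $\varphi(x_n+s)\geq\delta\varphi(x_n)$, in which case a pointwise lower bound on $(\mu-\varphi)\varphi=L\varphi$ (splitting the convolution into near and far parts) yields $\mu\geq\tfrac12+\delta(\mu-\tfrac12+\varepsilon)$ and a contradiction; or $\varphi$ decays geometrically, which forces $\varphi\in L^1(\R)$ and lets the paper invoke the integrated identity $(\mu-1)\int\varphi=\|\varphi\|_{L^2}^2$ from \cite{EW19}. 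You bypass the dichotomy entirely by proving the truncated integrated identity $(\mu-1)\int_{-R}^R\varphi=\int_{-R}^R\varphi^2+E(R)$ with $E(R)\to0$, valid without assuming $\varphi\in L^1(\R)$; your control of $E(R)$ via the bound $|g_R(y)|\leq 2y\,\varphi(\max(R-y,0))$, the integrability of $y\,K(y)$ (Lemma~\ref{propertiesWK}), and dominated convergence is sound (a single application of dominated convergence with dominating function $2\|\varphi\|_\infty\, y K(y)$ actually handles the region $y\gtrsim R$ as well, so no separate argument is needed there). In effect you are re-deriving, for monotone tails, the conclusion of Lemma~\ref{lemm4}/Corollary~\ref{cor5} with monotonicity replacing the $C^1$ hypothesis and the mollification by $\psi$ — a cleaner and shorter path for this particular proposition, at the cost of not exhibiting the decay dichotomy that the paper's argument showcases. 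One small caveat, shared equally by the paper's own proof: the step ``nonconstant and nonnegative, hence $\int\varphi^2>0$'' tacitly excludes the degenerate pointwise solution that is zero a.e.\ (e.g.\ $\varphi=\mu\mathbf{1}_{\{0\}}$), which also defeats the paper's Case~1 (division by $\varphi(x_n)=0$); this is an implicit regularity assumption in the statement rather than a defect of your argument.
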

%%%%%%%%%%%%%%%%

\begin{proof}
Assume that $\mu<1$. Due to assumption  $\lim_{x\rightarrow- \infty} \varphi(x)$ exists, 
and according to Proposition \ref{prop1}, the limit takes the value $0$. Because $\varphi$ solves $(\mu-\varphi)\varphi=L\varphi$, we have for $x<0$,
\begin{equation*}
(\mu-\varphi(x))\varphi(x)=  \int_{|y| < |x|} K(x-y) \varphi(y) \, \rd y +  \int_{|y| \geq |x|} K(x-y) \varphi(y) \, \rd y.  
\end{equation*}
Using that $\varphi$ is nondecreasing on $(-\infty,0)$, even, and nonnegative, as well as that $K$ 
is positive on $\mathbb{R} \backslash \{ 0\}$, we further get for $x<0$, 
 \begin{equation*}
 	(\mu-\varphi(x))\varphi(x) \geq \varphi(x) \int_0^{-2x} K(y) \, \rd y + \int_{x+s<y<x} K(x-y) \varphi(y) 
 	\, \rd y, 
 \end{equation*}
where $s<0$ is chosen in such a way that (since $\Vert K\Vert_{L^1(\mathbb{R})}=1$) 
\begin{equation}
	\label{prop2_1}
	\frac{1}{2} > \int_s^0 K(z) \, \rd z > \mu- \frac{1}{2} + \varepsilon, 
\end{equation}
where $\varepsilon>0$. Hence by defining 
\begin{equation*}
\Psi(x):= \int_0^{-2x} K(y) \, \rd y
\end{equation*}
and by the monotonicity of $\varphi$ we obtain for $x<0$,
\begin{equation}
\label{prop2_2}
	(\mu-\varphi(x))\varphi(x) \geq \varphi(x) \Psi(x) + \varphi(x+s) \int_s^0 K(z) \, \rd z.
\end{equation}
To obtain a contradiction to \(\mu < 1\), we pick a constant $\delta \in (0,1)$ so that 
\begin{equation}
	\label{prop2_3}
\frac{\mu-1/2}{\mu-1/2 + \varepsilon} < \delta <1,
\end{equation}
and consider two cases, depending on the uniformity of decay of \(\varphi\).\\[-8pt]

\noindent\textbf{Case 1.} There exists a sequence of negative real numbers $\{x_n\}_{n\in \mathbb{N}}$ with 
$\lim_{n\rightarrow \infty} x_n= - \infty$ such that $\varphi(x_n+s) \geq \delta \varphi(x_n)$ for each $n$.  In this case, \eqref{prop2_1}, \eqref{prop2_2}, and the positivity of $K$ entail that 
\begin{align*}
(\mu-\varphi(x_n))\varphi(x_n) &\geq \varphi(x_n) \Psi(x_n) + \delta \varphi(x_n) \int_s^0 K(z) \, \rd z
\\
& \geq \varphi(x_n) \Big(  \Psi(x_n) + \delta \big(\mu- \tfrac{1}{2} + \varepsilon\big)\Big),  
\end{align*}
and therefore 
\begin{equation*}
	\mu- \varphi(x_n) \geq  \Psi(x_n) + \delta \big(\mu- \tfrac{1}{2} + \varepsilon\big). 
\end{equation*}
From $\varphi(x_n) \rightarrow 0$ and $\Psi(x_n) \rightarrow 1/2$ as $n\rightarrow \infty$, it follows that 
\begin{equation*}
	\mu \geq \tfrac{1}{2} + \delta \big(\mu- \tfrac{1}{2} + \varepsilon\big), 
\end{equation*}
contradicting \eqref{prop2_3}. \\[-8pt]

\noindent \textbf{Case 2.} There exists $x_{\ast}<0$ such that for all $x \leq x_{\ast}$ the inequality $\varphi(x+s) <\delta \varphi(x)$ holds. In this case, by the monotonicity of $\varphi$, we see that 
\begin{align*}
	\int_{-\infty}^{x_{\ast}} \varphi(x) \, \rd x 
	&= \sum_{n\in \mathbb{N}} \int_{x_{\ast}+(n+1)s}^{x_{\ast}+ns} \varphi(x) \, \rd x 
	\\
	&\leq \sum_{n\in \mathbb{N}} \vert s \vert \varphi(x_{\ast}+ns)
	\\
	& \leq \vert s \vert \varphi(x_{\ast}) \sum_{n\in \mathbb{N}} \delta^n 
	= \vert s \vert \varphi(x_{\ast}) \frac{1}{1-\delta}.
 \end{align*}
Combined with the nonnegativity of $\varphi$, this shows that $\varphi \in L^1(\mathbb{R})$. Therefore, integration of the equation \eqref{steadyW} (see Proposition 4.6 in \cite{EW19})
implies that 
\begin{equation}\label{eq:the L2 equation}
	(\mu -1) \int_{\mathbb{R}} \varphi(x) \, \rd x = \Vert \varphi \Vert_{L^2(\mathbb{R})}^2
\end{equation}
and since $\varphi \not \equiv 0$ this again is a contradiction to the assumption that \(\mu < 1\). In both cases, we conclude that \(\mu \geq 1\).
\end{proof}

%\noindent The following proposition is needed for Corollary~\ref{cor5}, and is based on the earlier mentioned result from \cite{BFRW97}.

%%%%%%.  PROP3  %%%%%%%%%%
%\begin{proposition}
%\label{prop3}
%Let $\varphi$ be a bounded and continuously differentiable solution to  \eqref{steadyW} with wave speed $\mu \geq 0$  such that $\lim_{x\rightarrow \pm \infty} \varphi(x)$ exist. Then 
%\begin{equation*}
%	\lim_{R \rightarrow \infty} \int_{-R}^R \varphi \big(  (\mu-1) - \varphi \big) \, \rd x =0. 
%\end{equation*}
%In particular, if $\mu\leq 1$, then $\inf \varphi < 0< \sup \varphi $, $\varphi \equiv 0$ or $\varphi \equiv \mu-1$.  
%\end{proposition}
%%%%%%%%%%%%%%%%

%\noindent In the proof of Proposition~\ref{prop3} we use the following regularisation lemma, see \cite[pp. 113-114]{BFRW97}.

\noindent For Corollay~\ref{cor5} below we need the following regularisation lemma, for the proof of which 
we refer to \cite[pp. 113-114]{BFRW97}.

%%%%%%%%%%%%%%%%
\begin{lemma}{\rm\cite{BFRW97}} 
\label{lemm4}
Let $f$ be a continuously differentiable function such that $\lim_{x\rightarrow \pm \infty} f(x)$ exist, and let $J$ be a nonnegative even function with unit integral satisfying $\int_{\mathbb{R}} J(y) \vert y \vert \, \rd y < \infty$. Then 
\begin{equation*}
	\lim_{R \rightarrow \infty} \int_{-R}^R (J \ast f -f ) \, \rd x =0. 
\end{equation*}
\end{lemma}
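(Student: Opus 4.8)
\section*{Proof proposal}

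The plan is to interchange the order of integration, reduce the truncated convolution to integrals of $f$ over two short windows near $\pm R$, and then pass to the limit by dominated convergence, exploiting the evenness of $J$. Throughout, note that $f$ is bounded since it is continuous with finite limits at $\pm\infty$; write $a := \lim_{x\to-\infty} f(x)$ and $b := \lim_{x\to+\infty} f(x)$.

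First I would apply Fubini's theorem, which is justified because $f$ is bounded and $J \in L^1(\R)$, and then substitute $u = x-y$ in the inner integral to obtain
\begin{equation*}
\int_{-R}^R (J\ast f)(x)\,\rd x = \int_{\R} J(y)\int_{-R-y}^{R-y} f(u)\,\rd u \, \rd y.
\end{equation*}
Subtracting $\int_{-R}^R f(x)\,\rd x = \int_{\R} J(y)\big(\int_{-R}^R f(u)\,\rd u\big)\,\rd y$, which uses $\Vert J\Vert_{L^1(\R)} = 1$, and splitting the domains of integration by additivity of the oriented integral, the difference telescopes to
\begin{equation*}
\int_{-R}^R (J\ast f - f)\,\rd x = \int_{\R} J(y)\left(\int_{-R-y}^{-R} f(u)\,\rd u - \int_{R-y}^{R} f(u)\,\rd u\right)\rd y.
\end{equation*}

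Next I would analyse the bracketed term for fixed $y$. Substituting $u = -R+t$ in the first integral and $u = R+t$ in the second gives $\int_{-y}^0 f(-R+t)\,\rd t$ and $\int_{-y}^0 f(R+t)\,\rd t$ respectively; since $f(\mp R+t)$ converges uniformly to $a$, respectively $b$, on the compact $t$-interval as $R \to \infty$, the bracket tends pointwise in $y$ to $(a-b)\,y$. The bracket is moreover dominated, uniformly in $R$, by $2\Vert f\Vert_{L^\infty(\R)}\,\vert y\vert$, and the function $y \mapsto J(y)\,2\Vert f\Vert_{L^\infty(\R)}\,\vert y\vert$ is integrable precisely by the moment hypothesis $\int_{\R} J(y)\vert y\vert\,\rd y < \infty$. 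Dominated convergence then lets me pass the limit under the integral sign:
\begin{equation*}
\lim_{R\to\infty}\int_{-R}^R (J\ast f - f)\,\rd x = (a-b)\int_{\R} y\,J(y)\,\rd y = 0,
\end{equation*}
the last equality holding because $J$ is even, so its first moment vanishes.

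I expect the main obstacle to be the bookkeeping that isolates the two boundary windows near $\pm R$ and the verification that the dominating function is integrable; once the finite first moment of $J$ is brought into play, both the Fubini interchange and the dominated convergence step are routine, and the conclusion follows from the evenness of $J$ alone.
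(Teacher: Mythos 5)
Your proof is correct and complete. Note that the paper does not actually prove Lemma~\ref{lemm4}; it defers entirely to \cite[pp.~113--114]{BFRW97}, so there is no internal proof to compare against. The argument in that reference is the classical one exploiting the $C^1$ hypothesis: one writes $J\ast f - f = \int_{\R} J(y)\bigl(f(x-y)-f(x)\bigr)\,\rd y$, expresses the increment as $-\int_0^1 y f'(x-sy)\,\rd s$, and after Fubini uses $\int_{-R}^R f'(x-sy)\,\rd x = f(R-sy)-f(-R-sy) \to b-a$, so that the limit is $-(b-a)\int_{\R} yJ(y)\,\rd y = 0$ by evenness. Your route instead shifts the integration window and isolates the two boundary strips near $\pm R$, which buys a slightly more general statement: you never use differentiability of $f$, only continuity, boundedness and the existence of the limits $a$ and $b$. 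Both arguments rest on the same two pillars --- the finite first moment of $J$ to justify dominated convergence, and the evenness of $J$ to annihilate $(a-b)\int_{\R} yJ(y)\,\rd y$ --- and your bookkeeping (the telescoping of oriented integrals, the uniform bound $2\Vert f\Vert_{L^\infty(\R)}\vert y\vert$ on the bracket, and the uniform convergence of $f(\mp R + t)$ on compact $t$-intervals) is all in order.
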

\medskip
%%%%%%%%%%%%%%%%

%\noindent \textit{Proof of Proposition \ref{prop3}.} Since the Whitham kernel $K$ is positive and even with $\int_{\mathbb{R}} K(y) \vert y \vert \, \rd y$ finite, due to $\Vert K \Vert_{L^1(\mathbb{R})}=1$ and Lemma \ref{propertiesWK}~(i), we get by Lemma \ref{lemm4} that
%\begin{equation}\label{eq:sign changing}
%\begin{aligned}
%0 &= \lim_{R \rightarrow \infty } \int_{-R}^R (K \ast \varphi-\varphi) \, \rd x\\ 
%&=\lim_{R \rightarrow \infty } \int_{-R}^R \varphi 
%((\mu-1)-\varphi) \, \rd x. 
%\end{aligned}
%\end{equation}
%If $\mu \leq 1$, then $\varphi \geq \mu-1$ (see Lemma 4.1 in \cite{EW19}), and therefore \eqref{eq:sign changing} implies that either $\varphi \equiv 0$, $\varphi \equiv \mu-1$, or $\varphi$ must be sign-changing.
%\qed
%\medskip

\noindent The following result, which is the last in this section, overcomes a problem when one considers solutions that are not necessarily in \(L^1(\R)\), cf. \eqref{eq:the L2 equation}. 
A similar version was presented recently in~\cite{TWW20}, but as the proof is of interest to the limiting procedure we give it here as well. When $\varphi$ is a continuously differentiable solution to  \eqref{steadyW} with wave speed $\mu \geq 0$ and finite $\lim_{x\rightarrow \pm \infty} \varphi(x)$, one can show from Lemma~\ref{lemm4} that
\begin{equation*}
	\lim_{R \rightarrow \infty} \int_{-R}^R \varphi \big(  (\mu-1) - \varphi \big) \, \rd x =0, 
\end{equation*}
but we shall use the following somewhat stronger result.

%%%%%%%%%%%%%%%%
\begin{corollary}
\label{cor5}
Let $\varphi \in L^{\infty}(\mathbb{R})$ be a solution of \eqref{steadyW} with wave speed $\mu \geq 0$ such that $\varphi$ is nonnegative and $\lim_{x\rightarrow \pm \infty} \varphi(x)$ exist. Then 
\begin{equation*}
\int_{\R}  \psi \ast \left( \varphi ((\mu-1)- \varphi) \right)  \, \rd x=0	
\end{equation*}
for every nonzero smooth and compactly supported test function $\psi$ with $\psi \geq 0$. 
In particular, only vanishing solutions have unit speed. 
\end{corollary}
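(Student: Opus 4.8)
The plan is to reduce the claimed identity to the regularisation Lemma~\ref{lemm4} after mollifying with \(\psi\). First I would use the equation \eqref{steadyW} itself to rewrite the integrand: since \(L\varphi = \varphi(\mu-\varphi)\), one has the pointwise identity
\[
\varphi\big((\mu-1)-\varphi\big) = L\varphi - \varphi = K\ast\varphi - \varphi .
\]
Convolving with \(\psi\) and using that \(\psi\in C_c^\infty(\R)\), \(K\in L^1(\R)\) and \(\varphi\in L^\infty(\R)\) permit Fubini (the triple integral is dominated by \(\|\varphi\|_\infty \|K\|_{L^1}\|\psi\|_{L^1}\)), hence the associativity and commutativity of the convolution, I obtain
\[
\psi\ast\big(\varphi((\mu-1)-\varphi)\big) = K\ast f - f, \qquad f := \psi\ast\varphi .
\]
The purpose of convolving with \(\psi\) is precisely to gain regularity: the mollification \(f\) is smooth even though \(\varphi\) need not be, which is what allows the application of Lemma~\ref{lemm4}.

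Next I would verify the hypotheses of Lemma~\ref{lemm4} for \(J=K\) and this \(f\). By Lemma~\ref{propertiesWK} the kernel \(K\) is even, nonnegative and of unit integral, and its first moment is finite: part (i) gives exponential decay for \(|x|\geq \tfrac12\), while part (ii) shows that its only singularity at the origin is the integrable \(|x|^{-1/2}\), so that \(|y| K(y)\) is integrable and \(\int_\R K(y)|y|\,\rd y<\infty\). The function \(f=\psi\ast\varphi\) is smooth, and since \(\varphi\) has limits \(\Phi_\pm\) at \(\pm\infty\), dominated convergence yields \(f(x)\to \Phi_\pm\int_\R\psi\) as \(x\to\pm\infty\); thus \(f\in C^1(\R)\) with limits at both ends. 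Lemma~\ref{lemm4} then gives \(\lim_{R\to\infty}\int_{-R}^R (K\ast f - f)\,\rd x =0\), which is exactly the asserted identity, understood as the symmetric limit matching the convention of the lemma.

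For the concluding statement I would specialise to \(\mu=1\). Then the integrand collapses to \(-\psi\ast\varphi^2\), and the identity reads \(\lim_{R\to\infty}\int_{-R}^R \psi\ast\varphi^2\,\rd x=0\). Because \(\psi\geq 0\) and \(\varphi^2\geq 0\), the continuous function \(\psi\ast\varphi^2\) is nonnegative, so the truncated integrals are nondecreasing in \(R\); their limit vanishing forces \(\psi\ast\varphi^2\equiv 0\). Choosing \(\psi\) strictly positive on some interval, the nonnegativity of the integrand then forces \(\varphi^2=0\) almost everywhere, whence \(\varphi\equiv 0\); so only the trivial solution can have unit speed.

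I expect the difficulty to be bookkeeping rather than depth: the points that require care are the justification of the exchange of convolutions (Fubini with an \(L^\infty\) factor that need not decay), the verification that the mollified \(f\) is genuinely \(C^1\) with existing limits at infinity, and the final passage from \(\psi\ast\varphi^2\equiv 0\) to \(\varphi\equiv 0\) \emph{without} assuming \(\varphi\in L^2(\R)\). This last deduction, impossible to run directly on \eqref{eq:the L2 equation} when the tails do not decay, is exactly what the mollification by \(\psi\) is designed to circumvent, and so it is the step I would check most carefully.
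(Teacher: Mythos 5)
Your proposal is correct and follows essentially the same route as the paper: convolve the identity $K\ast\varphi-\varphi=\varphi((\mu-1)-\varphi)$ with $\psi$, commute the convolutions to write it as $K\ast f-f$ with $f=\psi\ast\varphi$, verify that $f$ is $C^1$ with limits at $\pm\infty$ and that $K$ has finite first moment, and apply Lemma~\ref{lemm4}; the $\mu=1$ conclusion is also argued identically via nonnegativity of $\psi\ast\varphi^2$. Your explicit verification of the first-moment bound near the origin and the monotonicity-in-$R$ remark are slightly more detailed than the paper's, but the argument is the same.
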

%%%%%%%%%%%%%%%%

\begin{proof}
Let $0 \leq \varphi \in L^{\infty}(\mathbb{R})$ solve \eqref{steadyW}. Then convolution with a nonzero smooth and compactly supported test function $\psi \geq 0$ yields 
\begin{equation}
\label{cor5_1}
	\psi\ast (K\ast \varphi- \varphi)= \psi \ast \left(  \varphi ((\mu-1)- \varphi) \right). 
\end{equation}
By the properties of convolution we obtain that 
\begin{equation}
\label{cor5_2}
\psi\ast (K\ast \varphi- \varphi) = K \ast (\psi \ast \varphi) - \psi \ast \varphi. 
\end{equation}
The term $\psi \ast \varphi$ is bounded, continuously differentiable, and by Lebesgue's dominated convergence theorem the limits $\lim_{x \rightarrow \pm \infty}\psi \ast \varphi(x)$ exist. Since the Whitham kernel $K$ is positive and even with $\int_{\mathbb{R}} K(y) \vert y \vert \, \rd y$ finite, due to $\Vert K \Vert_{L^1(\mathbb{R})}=1$ and Lemma \ref{propertiesWK}~(i), we can apply Lemma~\ref{lemm4}, which allows us to conclude from  \eqref{cor5_1} and  \eqref{cor5_2} that 
\begin{equation*}
 \lim_{R \rightarrow \infty } \int_{-R}^R  \psi \ast \left( \varphi ((\mu-1)- \varphi) \right) \rd x =  \lim_{R \rightarrow \infty } \int_{-R}^R \big( K \ast (\psi \ast \varphi) - \psi \ast \varphi \big) \, \rd x = 0. 
\end{equation*}
In the special case when $\mu=1$, one has
\begin{equation*}
\lim_{R \rightarrow \infty } \int_{-R}^R \psi \ast \varphi^2 \, \rd x = 0,
\end{equation*}
which since $\psi$ is nonnegative and nonzero enforces $\varphi \equiv 0$. As \(\psi\) is compactly supported, the limit reduces to a fixed integral.
\end{proof}

%%%%%%%%%%%%%%%%
%%%%%%%%%%%%%%%%
\section{The construction of a family of solitary waves}\label{sec:construction}
%%%%%%%%%%%%%%%%
%%%%%%%%%%%%%%%%

\noindent We have now come to the point where we shall construct our solutions. To parameterise the family, let \(\lambda\) be the \emph{relative height} of a solution, given by
\[
\varphi_\lambda(0) = \frac{\lambda \mu}{2}, \qquad \lambda \in (0,1].
\]
Note that solutions \(\varphi_\lambda\) depend on \(\mu\) as well, as we are always considering solution pairs \((\varphi,\mu)\). With the help of the relative height we can describe a family of crests continuously placed between the zero solution and a highest wave at \(\varphi(0) = \frac{\mu}{2}\). While, for a given period, the periodic theory does not guarantee uniqueness of solutions with respect to \(\lambda\), it does guarantee existence. The following is a summary of several results in \cite{EW19} (see Sections 4--6).

%%%%%%%%%%%%%%%%
\begin{theorem} {\rm{\cite{EW19}}} \label{th6}
For each finite $P>0$ and each \(\lambda \in (0,1]\) there exists a \(P\)-periodic solution $\varphi_{P,\lambda} \in C^{1/2}(\mathbb{R})$ with wave speed \(\mu_{P,\lambda}\) and relative height \(\lambda\) of the steady Whitham equation \eqref{steadyW}. The solutions all have  subcritical wave speed   $0 < \mu_{P, \lambda} \leq 1$, obey the uniform bounds  
\begin{equation}
\label{eq:boundsP}
\mu_{P, \lambda} -1 \leq \varphi_{P, \lambda} \leq \varphi_{P, \lambda}(0)= \frac{\lambda \mu_{P, \lambda}}{2}, 
\end{equation}
and are even, strictly increasing  on $(-P/2,0)$ and smooth on $\R \setminus P \Z$. If $\lambda \in (0,1)$, then $\varphi_{P, \lambda} \in C^\infty(\R)$.
\end{theorem}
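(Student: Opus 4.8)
The plan is to reproduce, for each fixed period $P$, the scheme of \cite{EW19}: produce the family $(\varphi_{P,\lambda},\mu_{P,\lambda})$ by analytic global bifurcation and then extract the stated bounds, nodal structure and regularity from a priori estimates resting on the positivity and monotonicity of the periodised kernel $K_P$. I fix $P$ and work in the scale of even $P$-periodic Zygmund spaces $\mathcal{C}^s$, on which $L$ gains half a derivative by \eqref{bL}. First I would recast \eqref{steadyW} as $F(\varphi,\mu):=-\mu\varphi+L\varphi+\varphi^2=0$, noting that $(0,\mu)$ solves it for every $\mu$. Since $L$ acts on the $P$-periodic mode $\exp(2\pi\I nx/P)$ by the multiplier $m(2\pi n/P)$, the linearisation $D_\varphi F(0,\mu)=L-\mu$ has a one-dimensional kernel precisely at $\mu^\ast=m(2\pi/P)$, spanned by $\cos(2\pi x/P)$; a verification of the Crandall--Rabinowitz transversality conditions then yields a local real-analytic curve of even, nontrivial solutions emanating from $(0,\mu^\ast)$. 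As $m$ is strictly decreasing with $m(0)=1$, the bifurcation speed already satisfies $\mu^\ast<1$.

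Next I would continue this curve through the analytic global bifurcation theorem, obtaining a global branch which is locally a real-analytic curve and along which only the standard alternatives (reconnection to the trivial line, blow-up, or loss of compactness) can occur. The evenness and strict monotonicity on $(-P/2,0)$ persist along the branch because $K_P$ is even, positive, strictly decreasing on $(0,P/2)$ and convex on $(0,P)$ by Lemma~\ref{propertiesPWK}; this places the crest at the origin, so that $\varphi\le\varphi(0)$, which is the upper bound in \eqref{eq:boundsP}, and identifies $\varphi(P/2)$ as the trough. Evaluating \eqref{steadyW} at the trough and using $L\varphi(P/2)\ge\min\varphi$ (as $K_P\ge0$ integrates to one over a period) gives $\mu\,\varphi(P/2)\ge\varphi(P/2)\bigl(1+\varphi(P/2)\bigr)$, whose other constant solution is the value $\mu-1$ of Proposition~\ref{prop1}; a negative trough therefore forces both $\mu\le1$ and $\varphi\ge\mu-1$, and the trough stays nonpositive along the branch by continuity from the zero-mean profile $\cos(2\pi x/P)$ at bifurcation. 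This simultaneously yields the subcriticality $0<\mu\le1$ and the lower bound in \eqref{eq:boundsP}.

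Regularity follows from solving the quadratic \eqref{steadyW} for its lower root $\varphi=\tfrac12\bigl(\mu-\sqrt{\mu^{2}-4L\varphi}\bigr)$, which is consistent with $\varphi\le\mu/2$ since the identity $L\varphi=\varphi(\mu-\varphi)\le\mu^{2}/4$ keeps the radicand nonnegative. When $\lambda<1$ the radicand is bounded away from zero, the square root is a smooth function of $L\varphi$, and the half-derivative gain \eqref{bL} bootstraps $\varphi$ to $C^\infty(\R)$. To realise each prescribed relative height I would reparametrise the branch by $\lambda=2\varphi(0)/\mu$ and argue, by continuity along the global branch and the intermediate value theorem, that $\lambda$ runs continuously from $0$ at the bifurcation point up to $1$ at the highest wave, so that every $\lambda\in(0,1]$ is attained with $\varphi_{P,\lambda}(0)=\lambda\mu_{P,\lambda}/2$; this gives existence, though not uniqueness in $\lambda$.

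I expect the genuinely hard part to be the endpoint analysis at $\lambda=1$: the sharp bound $\varphi(0)\le\mu/2$, the optimal $C^{1/2}$-regularity there, and the fact that the branch truly limits onto the highest wave. The bound is cleanest from the crest-centred identity
\begin{equation*}
\bigl(\varphi(0)-\varphi(x)\bigr)\bigl(\mu-\varphi(0)-\varphi(x)\bigr)=L\varphi(0)-L\varphi(x),
\end{equation*}
whose right-hand side is nonnegative because the properties of $K_P$ in Lemma~\ref{propertiesPWK} render $L\varphi$ even and nonincreasing away from the crest; a crest with $\varphi(0)>\mu/2$ would then make the left-hand side negative for small $x>0$, a contradiction. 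At $\lambda=1$ the factor $\mu-\varphi(0)-\varphi(x)$ degenerates to $\varphi(0)-\varphi(x)$, and matching $\bigl(\varphi(0)-\varphi(x)\bigr)^{2}$ against the contribution of the singular part $(2\pi|x|)^{-1/2}$ of $K_P$ from Lemma~\ref{propertiesPWK}(iii) fixes the modulus of continuity at the crest to exactly $C^{1/2}$ and shows this to be optimal. This degenerate crest analysis, together with the compactness needed to pass the global branch to its highest-wave endpoint, is the principal obstacle; the remaining assertions follow comparatively routinely from the bifurcation picture and the estimates above.
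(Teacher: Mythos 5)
This theorem is quoted from \cite{EW19} (Sections 4--6) and the present paper offers no proof of its own, so the only meaningful comparison is with that reference: your sketch --- Crandall--Rabinowitz bifurcation at $\mu^\ast=m(2\pi/P)$ in even periodic H\"older/Zygmund spaces, analytic global continuation with the nodal pattern preserved via the positivity, monotonicity and convexity of $K_P$, the bounds \eqref{eq:boundsP} from crest/trough evaluations, smoothness for $\lambda<1$ by taking the lower root $\varphi=\tfrac12\bigl(\mu-\sqrt{\mu^2-4L\varphi}\bigr)$ and bootstrapping with \eqref{bL}, and the degenerate $C^{1/2}$ crest analysis at $\lambda=1$ --- is precisely the strategy of that paper. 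The outline is correct as far as it goes; the steps you defer (persistence of the nodal properties along the global branch, exclusion of the bifurcation alternatives, the compactness argument delivering the highest wave, and the strict positivity $\mu_{P,\lambda}>0$ together with the sign of the trough, for which the cleanest route is the period-integrated identity $(\mu-1)\int\varphi\,\rd x=\int\varphi^2\,\rd x$ rather than ``continuity from the zero-mean profile'') are exactly where the real work of \cite{EW19} lies.
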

%%%%%%%%%%%%%%%%

\noindent The following proposition is a uniform refinement of a result in \cite{EW19}, which will be used to get bounds for the limiting solution as \(P \to \infty\). It follows the structure of Lemma 5.2 in \cite{EW19}, but uses the knowledge of \(K_P\) to deduce uniformity in \(P\).

%%%%%%%%%%%%%%%%
\begin{proposition}\label{prop7} 
Let $(\varphi_{P,\lambda} ,\mu_{P,\lambda})$ be as in Theorem \ref{th6}. There exists a positive constant
  \(\delta > 0\) such that 
\begin{equation*}
\frac{\mu_{P,\lambda}}{2} - \varphi_{P,\lambda}(x)  \geq \delta \vert x \vert^{1/2}, 
\end{equation*}
holds uniformly for all $P \geq 1$, $\lambda \in (0,1]$ and $|x| \leq \delta$.
\end{proposition}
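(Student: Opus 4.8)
The plan is to reduce the statement, via the completed-square form of \eqref{steadyW}, to a one-sided estimate for a kernel difference, and then to extract the half-power from the square-root singularity of $K_P$. Abbreviating $\varphi=\varphi_{P,\lambda}$ and $\mu=\mu_{P,\lambda}$, and using that $L\varphi=K_P\ast\varphi$ for a $P$-periodic profile, equation \eqref{steadyW} reads $K_P\ast\varphi=\tfrac{\mu^2}{4}-\big(\tfrac{\mu}{2}-\varphi\big)^2$. Writing $w:=\tfrac{\mu}{2}-\varphi\ge 0$ and subtracting the identity at $x$ from the one at $0$ gives
\begin{equation*}
w(x)^2-w(0)^2=(K_P\ast\varphi)(0)-(K_P\ast\varphi)(x).
\end{equation*}
Here $w(0)=\tfrac{\mu}{2}(1-\lambda)\ge 0$, and since $\varphi\le\varphi(0)$ we have $w(x)\ge w(0)$ everywhere. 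Thus whenever $w(0)\ge\delta^{3/2}$ the claim is immediate for $|x|\le\delta\le 1$; the substance lies entirely in the regime of small $w(0)$, that is $\lambda\to 1$, where the half-power must be produced by the right-hand side. It therefore suffices to prove $(K_P\ast\varphi)(0)-(K_P\ast\varphi)(x)\gtrsim|x|$ for $|x|\le\delta$, uniformly in $P\ge 1$ and $\lambda$, since this yields $w(x)^2\ge c|x|$ and hence $w(x)\ge\sqrt{c}\,|x|^{1/2}$.

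To treat the kernel difference I would pass to a symmetric form. Setting $u:=\varphi(0)-\varphi\ge 0$, which is even, increasing away from the origin and vanishes there, evenness of $K_P$ and $\varphi$ together with $\|K_P\|_{L^1}=1$ give, after folding,
\begin{equation*}
(K_P\ast\varphi)(0)-(K_P\ast\varphi)(x)=\int_0^\infty\big(K_P(y+x)-2K_P(y)+K_P(y-x)\big)\,u(y)\,\rd y,
\end{equation*}
the integral being local near the crest, so that only the behaviour of $K_P$ on a fixed neighbourhood of the origin is relevant. On that range the convexity of $K_P$ makes the second difference nonnegative, and the monotonicity of $u$ together with an elementary telescoping identity yields the clean bound
\begin{equation*}
\int_x^\infty\big(K_P(y+x)-2K_P(y)+K_P(y-x)\big)u(y)\,\rd y\ \ge\ u(x)\Big(2\!\int_0^x\! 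K_P-\int_0^{2x}\! K_P\Big).
\end{equation*}
Inserting the decomposition $K_P(y)=\tfrac{1}{\sqrt{2\pi|y|}}+K_{P,\mathrm{reg}}(y)$ from Lemma~\ref{propertiesPWK}, the singular part evaluates the bracket to $\sqrt{2/\pi}\,(2-\sqrt 2)\,|x|^{1/2}+O(x)$, so this contribution is bounded below by $\gtrsim|x|^{1/2}u(x)$.

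This is also where the uniformity in $P$ enters: the singular part $\tfrac{1}{\sqrt{2\pi|y|}}$ is independent of $P$, and by Lemma~\ref{propertiesPWK} the remainder $K_{P,\mathrm{reg}}$ is infinitely differentiable on $(-P,P)\supset[-\delta,\delta]$ with bounds uniform for $P\ge 1$ once $\delta<1$. Consequently every constant produced near the crest depends only on the local behaviour of $K_P$ on $[-\delta,\delta]$, which is $P$-independent up to uniformly controlled smooth corrections. This is precisely the point at which the argument improves on Lemma~5.2 of \cite{EW19}: the same computation carried out with $K_P$ in place of $K$ retains $P$-free constants.

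I expect the main obstacle to be the complementary range $(0,x)$, where the second difference of $K_P$ changes sign and, because of the singularity, is strongly negative near $y=0$. Estimating $u$ by $u(x)$ there is far too lossy, since the exact telescoping shows the negative mass can cancel the positive contribution above. The remedy I would pursue, following \cite{EW19}, is to control this range by the uniform a priori upper bound $u(y)\lesssim|y|^{1/2}$, so that the strongly singular factor is weighted by a small $u$ and the negative contribution is only $O(x)$, and then to close the resulting inequality. The latter step is delicate because the relation is essentially scale-invariant and the profile $|x|^{1/2}$ is its fixed point; making the closing quantitative while keeping all constants independent of both $P\ge 1$ and $\lambda\in(0,1]$, in particular as $\lambda\to 1$, is the crux of the proof.
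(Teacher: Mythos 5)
Your proposal sets up a genuinely different framework from the paper's (a completed-square identity plus a second-difference/convexity decomposition of the kernel), but it does not close, and you say so yourself: the negative contribution of $\int_0^x\bigl(K_P(y+x)-2K_P(y)+K_P(y-x)\bigr)u(y)\,\rd y$, where the second difference is strongly negative near $y=0$, is left unresolved and labelled ``the crux.'' This is the whole difficulty, and it cannot be repaired within your reduction: the intermediate claim that $(K_P\ast\varphi)(0)-(K_P\ast\varphi)(x)\gtrsim|x|$ for \emph{all} $|x|\le\delta$, uniformly in $\lambda$, is false. Indeed $(K_P\ast\varphi)(0)-(K_P\ast\varphi)(x)=w(x)^2-w(0)^2$ exactly, and for a smooth wave ($\lambda<1$) with small but positive $w(0)$ one has $w(x)-w(0)=\varphi(0)-\varphi(x)=O_\lambda(x^2)$ near the crest, so the kernel difference is $O_\lambda(x^2)$ there, not $\gtrsim|x|$. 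The correct sufficient claim would be restricted to $|x|\gtrsim w(0)^2$ (below that scale $w(x)\ge w(0)$ already suffices), and even then one faces the scale-invariance obstruction you identify: with only the uniform $C^{1/4}$ (or even $C^{1/2}$) control on $u$ available a priori, the negative and positive contributions are of the same order and the sign of their sum is not determined. So the proposal contains a genuine gap, not merely a missing computation.

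For contrast, the paper sidesteps this entirely by working with the \emph{first} (odd) difference rather than the second one: $(L\varphi)(x+h)-(L\varphi)(x-h)=\int_{-P/2}^0(K_P(y-x)-K_P(y+x))(\varphi(y+h)-\varphi(y-h))\,\rd y$ has a pointwise nonnegative integrand (monotonicity of $K_P$ and of $\varphi$ on half-periods), so there is no sign-changing region to control. Passing to the limit $h\to0$ via Fatou gives $(\tfrac{\mu}{2}-\varphi(x))\varphi'(x)\ge\tfrac12\int(K_P(y-x)-K_P(y+x))\varphi'(y)\,\rd y$, and the decisive trick is then to restrict the $y$-integral to a window $(x_2,x_1)$, bound the kernel factor below by $-x_1K_P'(2x_2)$ via the mean value theorem and convexity, and \emph{integrate the resulting inequality in $x$ over $(x_2,x_1)$}, which cancels the unknown factor $\varphi(x_1)-\varphi(x_2)$ on both sides. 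With $x_2=x$, $x_1=x/2$ this yields $\tfrac{\mu}{2}-\varphi(x)\ge\tfrac14x^2K_P'(2x)$, and the half-power then comes from the explicit, $P$-uniform lower bound $K_P'(x)\ge\frac{1}{\sqrt{8\pi}}|x|^{-3/2}-c$ for $0<-x\le1/2$, $P\ge1$. No modulus of continuity of $\varphi$ at the crest is ever needed. Your observation that the uniformity in $P$ comes from the $P$-independent singular part of $K_P$ plus uniformly controlled smooth corrections is correct and is indeed the same mechanism the paper uses; but the core analytic step is missing from your argument.
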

%%%%%%%%%%%%%%%%

\begin{proof}
Since $\varphi_{P,\lambda}$ is even it suffices to consider the case $x<0$.  
Using the evenness and periodicity of $K_P$ and $\varphi_{P,\lambda}$ we obtain by the definition of $L$ that 
\begin{align*}
&(L\varphi_{P,\lambda})(x+h)-(L\varphi_{P,\lambda})(x-h)\\ &\quad= \int_{-P/2}^0 (K_P(y-x)-K_P(y+x)) (\varphi_{P,\lambda}(y+h) - \varphi_{P,\lambda}(y-h)) \, \rd y. 
\end{align*}
The integrand is nonnegative since $K_P(y-x)-K_P(y+x) >0$ for $x,y \in (-P/2,0)$ by Lemma \ref{propertiesPWK} 
 and $\varphi_{P,\lambda}(y+h) -\varphi_{P,\lambda}(y-h)  \geq 0$ for $y \in (-P/2,0)$ and $h \in (0,P/2)$ 
 by Theorem \ref{th6}. 
For $x \in (-P/2,0)$, an application of Fatou's lemma to 
\begin{equation}
\label{eq:FatouP}
	(\tfrac{\mu_{P,\lambda}}{2}-\varphi_{P,\lambda}(x)) \varphi_{P,\lambda}'(x) = \lim_{h\rightarrow 0} \frac{(L\varphi_{P,\lambda})(x+h)-(L\varphi_{P,\lambda})(x-h)}{4h}
\end{equation}
combined with the previous identity gives 
\begin{equation}
	\label{prop7_1}
(\tfrac{\mu_{P,\lambda}}{2}-\varphi_{P,\lambda}(x)) \varphi_{P,\lambda}'(x) 	\geq \frac{1}{2} \int_{-P/2}^0 (K_P(y-x)-K_P(y+x)) \varphi_{P,\lambda}'(y) \, \rd y .
\end{equation}
For \(\lambda < 1\) and $\mu_{P,\lambda}>0$, \eqref{prop7_1} is an equality due to \eqref{steadyW} and $\varphi_{P, \lambda} \in C^\infty(\R)$. Now fix $x_1$, $x_2$ with $-P/4 < x_2<x_1<0$, let $x \in (x_2,x_1)$, and consider $\xi \in (-P/2,x_2]$. Then we infer from the monotonicity of $\varphi_{P,\lambda}$ on $(-P/2,0)$ and \eqref{prop7_1} that 
\begin{align*}
	(\tfrac{\mu_{P,\lambda}}{2}- \varphi_{P,\lambda}(\xi)) \varphi_{P,\lambda}'(x) &\geq 	(\tfrac{\mu_{P,\lambda}}{2}- \varphi_{P,\lambda}(x)) \varphi_{P,\lambda}'(x) 
	\\
	&\geq \frac{1}{2} \int_{-P/2}^0 (K_P(y-x)-K_P(y+x)) \varphi_{P,\lambda}'(y) \, \rd y 
	\\
	&\geq \frac{1}{2} \int_{x_2}^{x_1} (K_P(y-x)-K_P(y+x)) \varphi_{P,\lambda}'(y) \, \rd y 
	\\
	&= \frac{1}{2} \int_{x_2}^{x_1} (-2x) K_P'(y+\zeta)  \varphi_{P,\lambda}'(y) \, \rd y 
	\\
	& \geq -x_1 K_P'(2x_2) (\varphi_{P,\lambda}(x_1)-\varphi_{P,\lambda}(x_2)), 
\end{align*}
where $\vert \zeta \vert < \vert x \vert$ follows from the mean value theorem and where we have also used the convexity of $K_P$ on $(-P/2,0)$. Integration over $(x_2,x_1)$ in $x$ and division by $\varphi_{P,\lambda}(x_1)-\varphi_{P,\lambda}(x_2)>0$ gives 
\begin{equation*}
\frac{\mu_{P,\lambda}}{2}- \varphi_{P,\lambda}(\xi) \geq -x_1 K_P'(2x_2) (x_1-x_2). 
\end{equation*}
By taking $\xi=x_2=x$ and $x_1=x/2$ with $x \in (-P/4,0)$ we get that 
\begin{equation}
\label{prop7_2_2}
\frac{\mu_{P,\lambda}}{2}- \varphi_{P,\lambda}(x) \geq \frac{1}{4} \, x^2 \, K_P'(2x).
\end{equation}
%By taking $x_2=x_0$ and $x_1=x_0/2$ with $x_0<0$ we get that 
%\begin{equation}
%\label{prop7_2}
%\frac{\mu_{P,\lambda}}{2}- \varphi_{P,\lambda}(\xi) \geq \frac{1}{4} \, x_0^2 \, K_P'(2x_0)
%\end{equation}
%and in particular that 
%\begin{equation}
%\label{prop7_2_2}
%\frac{\mu_{P,\lambda}}{2}- \varphi_{P,\lambda}(x) \geq \frac{1}{4} \, x^2 \, K_P'(2x)
%\end{equation}
%for all $x\in (-P/4,0)$. 
Next, in view of the definition of $K_P$ and Lemma \ref{propertiesWK}~(ii), we have 
\begin{align*}
	K_P &=K+ \sum_{n\in \mathbb{Z}\backslash \{0\}} K(\cdot + nP) 
	\\
	&= \frac{1}{\sqrt{2\pi \vert \cdot \vert}} + K_{\text{reg}} + \sum_{n\in \mathbb{Z}\backslash \{0\}} K(\cdot + nP). 
\end{align*}
Choose  $0<-x \leq 1/2$ so that \(P \geq 1 \geq 2|x|\) always holds. We then infer again from Lemma~\ref{propertiesWK} (ii) that
\begin{align*}
K_P'(x) &= \frac{1}{ \sqrt{8 \pi} \, \vert x\vert^{3/2}} + K_{\text{reg}}'(x) + \sum_{n\in \mathbb{Z}\backslash \{0\}} K'(x+ nP) 
\\
&\geq \frac{1}{ \sqrt{8 \pi} \, \vert x\vert^{3/2}} -c +  \sum_{n\in \mathbb{Z}\backslash \{0\}} K'(x+ nP) 
\end{align*}
with $c>0$ being a universal constant independent of $P$ and $\lambda$, modified whenever needed. With Lemma \ref{propertiesWK} (i) we further obtain for a fixed $s_0 \in (0,\pi/2)$ that
\begin{align*}
K_P'(x) &\geq   \frac{1}{ \sqrt{8 \pi} \, \vert x\vert^{3/2}} - c \Bigg( 1 + \sum_{n\in \mathbb{Z}\backslash \{0\}} \exp(-s_0 \vert x+nP\vert)\Bigg)
\\
& \geq  \frac{1}{ \sqrt{8 \pi} \, \vert x\vert^{3/2}} - c \Bigg( 1+  \sum_{n\in \mathbb{Z}\backslash \{0\}} \exp\big(\tfrac{-s_0 P \vert n \vert}{2} \big)\Bigg) 
\\
& \geq \frac{1}{ \sqrt{8 \pi} \, \vert x\vert^{3/2}} - c,
\end{align*}
again for $0<-x \leq 1/2$ and $P\geq 1$.
By combining this with \eqref{prop7_2_2} and picking \(\delta \in (0,c)\) small enough, one obtains the desired bound 
\begin{equation*}
	\frac{\mu_{P,\lambda}}{2}- \varphi_{P,\lambda}(x) \geq \delta \vert x \vert^{1/2},
\end{equation*}
uniformly for $P\geq 1$,  $\lambda \in (0,1]$ and $|x| \in [0,\delta]$ by continuity.
\end{proof}

\noindent The following theorem contains the main convergence result, but as we shall see, does not yield solitary waves directly, as we do not the control the asymptotic behaviour of these waves. In fact, since all periodic solutions are sign-changing, one could imagine different situations appearing, and only Corollary~\ref{cor:galilean} below resolves this.

%%%%%%%%%%%%%%%%
\begin{theorem}\label{thm:main}
Fix $\lambda \in (0,1]$.
Given a sequence $\{ (\varphi_{P_n,\lambda}, \mu_{P_n,\lambda})\}_{n}$ of solution pairs from Theorem \ref{th6} with \(P_n \to \infty\) as \(n \to \infty\), there exists a subsequence converging locally uniformly as $P_n \to \infty$ to a solution pair 
\[
(\phi_\lambda, \nu_\lambda)  \in C(\mathbb{R}) \times [0,1]
\]
with relative height $\lambda$ and 
satisfying 
\[
\nu_\lambda -1 \leq \phi_\lambda \leq \frac{\lambda \nu_\lambda}{2}. 
\]
The locally uniform limit $\phi_{\lambda}$ is even, nonconstant, nonperiodic, 
strictly increasing with \(\phi_\lambda' > 0\) on $(-\infty,0)$, and smooth on $\mathbb{R}$ in the case of \(\lambda \in (0,1)\) and $\nu_\lambda >0$.
\end{theorem}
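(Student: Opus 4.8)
The plan is to obtain $(\phi_\lambda,\nu_\lambda)$ along a subsequence by extracting compactness from the uniform data of Theorem~\ref{th6} and the uniform crest estimate of Proposition~\ref{prop7}, and then to read off each qualitative property by passing the corresponding property of the periodic waves to the limit. Since $0<\mu_{P_n,\lambda}\le 1$, I would first pass to a subsequence with $\mu_{P_n,\lambda}\to\nu_\lambda\in[0,1]$. The bounds \eqref{eq:boundsP} give $-1\le\varphi_{P_n,\lambda}\le\tfrac12$, so the family is uniformly bounded in $L^\infty(\R)$. To upgrade this to equicontinuity I would use that, by \eqref{bL}, $L\colon L^\infty(\R)\hookrightarrow\mathcal C^0\to\mathcal C^{1/2}$ is bounded, so that $L\varphi_{P_n,\lambda}=\mu_{P_n,\lambda}\varphi_{P_n,\lambda}-\varphi_{P_n,\lambda}^2$ is uniformly bounded in $C^{1/2}(\R)$. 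Writing $g_\mu(t)=\mu t-t^2$ and inverting on the increasing branch $t\le\mu/2$, which contains the range of $\varphi_{P_n,\lambda}$ by \eqref{eq:boundsP}, I would use that the inverse $g_\mu^{-1}$ is $\tfrac12$-H\"older up to the critical value $\mu^2/4$ with a constant independent of $\mu$; composing $g_\mu^{-1}$ with the $C^{1/2}$-bounded $L\varphi_{P_n,\lambda}$ then yields a uniform $C^{1/4}(\R)$-bound on $\varphi_{P_n,\lambda}$ itself, which is more than enough for Arzel\`a--Ascoli. Applied on each interval $[-M,M]$ together with a diagonal argument, this produces a subsequence converging locally uniformly to some $\phi_\lambda\in C(\R)$.

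The main obstacle is the passage of the nonlocal term to the limit, since the periodic operator convolves against $K_{P_n}$ while the limiting operator convolves against $K$ over all of $\R$. Here I would use that, for the periodic extension of $\varphi_{P_n,\lambda}$, the definition \eqref{periodisedWK} of $K_{P_n}$ gives $L\varphi_{P_n,\lambda}(x)=\int_\R K(x-y)\varphi_{P_n,\lambda}(y)\,\rd y$. Splitting this into $|y|\le R$ and $|y|>R$ and using the locally uniform convergence on the former together with the uniform $L^\infty$-bound and the exponentially decaying tails of $K$ from Lemma~\ref{propertiesWK}(i) on the latter, I would conclude $L\varphi_{P_n,\lambda}(x)\to L\phi_\lambda(x)$ for each fixed $x$. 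Passing to the limit in \eqref{steadyW} then shows that $(\phi_\lambda,\nu_\lambda)$ solves the steady equation, while passing to the limit in \eqref{eq:boundsP} and in $\varphi_{P_n,\lambda}(0)=\lambda\mu_{P_n,\lambda}/2$ gives both $\nu_\lambda-1\le\phi_\lambda\le\lambda\nu_\lambda/2$ and the relative height $\phi_\lambda(0)=\lambda\nu_\lambda/2$.

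It remains to transfer the qualitative properties, and the conceptually delicate point is to exclude degeneration of the limit, for which Proposition~\ref{prop7} is indispensable. Evenness and monotonicity (nondecreasing on $(-\infty,0)$) pass directly to the locally uniform limit. For nonconstancy, the limit of Proposition~\ref{prop7} reads $\tfrac{\nu_\lambda}{2}-\phi_\lambda(x)\ge\delta|x|^{1/2}$ for $|x|\le\delta$; were $\phi_\lambda$ constant it would equal $\lambda\nu_\lambda/2$, but the only constant solutions of \eqref{steadyW} are $0$ and $\nu_\lambda-1$ by Proposition~\ref{prop1}, which together with $\nu_\lambda\le1$ forces $\phi_\lambda\equiv0$ and $\nu_\lambda=0$, contradicting the crest estimate at any $x\neq0$. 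The same estimate together with monotonicity yields $\phi_\lambda(x)<\nu_\lambda/2$ for all $x<0$. Nonperiodicity then follows since a function nondecreasing on $(-\infty,0)$ and periodic must be constant there, hence constant everywhere by evenness, which is excluded. Strict monotonicity I would obtain by passing the identity \eqref{prop7_1} to the limit, giving for $x<0$ that $(\tfrac{\nu_\lambda}{2}-\phi_\lambda(x))\phi_\lambda'(x)\ge\tfrac12\int_{-\infty}^0(K(y-x)-K(y+x))\phi_\lambda'(y)\,\rd y$; since $K(y-x)-K(y+x)>0$ for $x,y<0$ and the left factor is strictly positive, $\phi_\lambda'\not\equiv0$ propagates to $\phi_\lambda'>0$ on all of $(-\infty,0)$. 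Finally, for $\lambda\in(0,1)$ and $\nu_\lambda>0$ one has $\nu_\lambda-2\phi_\lambda\ge\nu_\lambda(1-\lambda)>0$, so $g_{\nu_\lambda}^{-1}$ is smooth along the solution and the relation $\phi_\lambda=g_{\nu_\lambda}^{-1}(L\phi_\lambda)$ bootstraps, through the half-derivative gain \eqref{bL}, to $\phi_\lambda\in C^\infty(\R)$.
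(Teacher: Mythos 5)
Your proposal is correct and follows essentially the same route as the paper: a subsequence for the wave speeds, a uniform $C^{1/4}$-bound obtained from the $C^{1/2}$-bound on $L\varphi_{P_n,\lambda}$ (your H\"older inverse of $g_\mu$ is exactly the paper's inequality $(\varphi(x)-\varphi(y))^2\le|L\varphi(x)-L\varphi(y)|$), Arzel\`a--Ascoli with a diagonal argument, passage to the limit in the convolution, Proposition~\ref{prop7} to exclude constants, and the kernel-difference identity for strict monotonicity and $\phi_\lambda'>0$. The only cosmetic deviations are that you split the tail of the convolution instead of invoking dominated convergence, and you re-derive the smoothness bootstrap rather than citing \cite[Theorem 5.1]{EW19}; just note that the monotonicity inequality for $\phi_\lambda'$ should be derived directly for the limit function (as in the paper, via Fatou applied to the difference quotient of $L\phi_\lambda$) rather than by ``passing \eqref{prop7_1} to the limit'', since convergence of the derivatives $\varphi_{P_n,\lambda}'$ is not available.
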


\begin{remark}
We shall see later that in fact \(\nu_\lambda > 0\) for all \(\lambda \in (0,1]\). One could build an argument as in Corollary~6.11 in \cite{EW19}, by establishing uniform lower bounds on the wave speed for all bifurcation curves for large \(P\) (they are not uniform for small \(P\), and even for large P the local slope of the curves is not uniform), but we choose to handle this after the limiting procedure instead, with the use of results from Section~\ref{sec:properties}.
\end{remark}
%%%%%%%%%%%%%%%%

\begin{proof}
Let $\lambda \in (0,1]$ be fixed.  By assumption, there exists a (not relabeled) subsequence \(\{P_n\}_n\) such that $\lim_{P_n \rightarrow \infty} \mu_{P_n,\lambda}$ converges to a value \(\nu_{\lambda} \in [0,1]\).  For each general
$P \in (0,\infty)$, we have by \eqref{steadyW}
\begin{equation}
\label{th8_1}
(\mu_{P,\lambda} - \varphi_{P,\lambda}(x)- \varphi_{P,\lambda}(y)) (\varphi_{P,\lambda}(x)- \varphi_{P,\lambda}(y))= L\varphi_{P,\lambda}(x) - L\varphi_{P,\lambda}(y)
\end{equation}
for all $x, y \in \mathbb{R}$. 
Since  $\varphi_{P,\lambda} \leq \frac{\lambda \mu_{P,\lambda}}{2} \leq \frac{\mu_{P,\lambda}}{2}$, it follows that
\begin{equation*}
\mu_{P,\lambda} - \varphi_{P,\lambda}(x)- \varphi_{P,\lambda}(y) \geq \varphi_{P,\lambda}(x)- \varphi_{P,\lambda}(y),
\end{equation*}
and hence from \eqref{th8_1} that 
\begin{equation*}
(\varphi_{P,\lambda}(x) -\varphi_{P,\lambda}(y))^2 \leq \vert L \varphi_{P,\lambda}(x) - L\varphi_{P,\lambda}(y) \vert, 
\end{equation*}
again for all $x, y \in \mathbb{R}$. Then, since $L$ is a bounded map from $L^{\infty}(\mathbb{R}) \hookrightarrow B^0_{\infty,\infty}(\mathbb{R})$ to $B^{1/2}_{\infty,\infty} (\mathbb{R}) = C^{1/2}(\mathbb{R})$ according to \eqref{bL}, we obtain that 
\begin{align*}
	(\varphi_{P_n,\lambda}(x) -\varphi_{P_n,\lambda}(y))^2 &\leq \Vert L\Vert_{\mathcal{L}(L^{\infty}(\mathbb{R}) , C^{1/2}(\mathbb{R}))} \, \Vert \varphi_{P_n,\lambda} \Vert_{\infty} \, 	\vert x-y \vert^{1/2}\\
	&\leq  \Vert L\Vert_{\mathcal{L}(L^{\infty}(\mathbb{R}) , C^{1/2}(\mathbb{R}))} 	\vert x-y \vert^{1/2},
\end{align*}
in view of  \eqref{eq:boundsP} in Theorem~\ref{th6}. As this holds for all $x,y \in \mathbb{R}$, it guarantees that $\{ \varphi_{P_n,\lambda}\}_{n}$ is uniformly bounded in $C^{1/4}(\mathbb{R})$. In particular, $\{ \varphi_{P_n,\lambda}\}_{n}$ is an equicontinuous family of solutions, and thus, by the Arzel\`{a}--Ascoli theorem and a diagonal argument, has a subsequence converging locally uniformly to a function $\phi_{\lambda} \in C(\mathbb{R})$. Also, $\phi_{\lambda}$ is bounded below by $\nu_{\lambda} -1$ and above by $\lambda \nu_{\lambda}/2$ due to  \eqref{eq:boundsP}. In addition, 
\[
\phi_{\lambda}(0) = \frac{\lambda\nu_{\lambda}}{2}.
\]
Because $|\varphi_{P_n,\lambda}| \leq 1$ and $\Vert K \Vert_{L^1(\mathbb{R}) }=1$, Lebesgue's dominated convergence theorem yields 
\begin{equation*}
	L\varphi_{P_n,\lambda}(x) = K \ast \varphi_{P_n,\lambda}(x) \rightarrow K \ast \phi_{\lambda}(x) = L\phi_{\lambda}(x)
\end{equation*}
as $P_n \rightarrow \infty$ for all $x \in \mathbb{R}$, and it thus follows that $\phi_{\lambda}$ solves \eqref{steadyW} with wave speed $\nu_{\lambda}$. 

Since $\varphi_{P_n,\lambda}$ is even and strictly increasing on $(-P_n/2,0)$, its locally uniform limit $\phi_{\lambda }$ inherits evenness and is at least nondecreasing on $(-\infty,0)$. In order to establish that $\phi_{\lambda }$ is strictly increasing on $(-\infty,0)$, we first prove that $\phi_{\lambda }$ is nonconstant. For this we note that in view of Proposition \ref{prop7}, we obtain in the limit $P_n \rightarrow \infty$ the inequality 
\begin{equation}
\label{th8_2}
\phi_{\lambda } (x) \leq \frac{\nu_{\lambda }}{2} - \delta \vert x \vert^{1/2}
\end{equation}
near the origin. In the case when \(\lambda = 1\) and \(\phi_{\lambda} (0) = \frac{\nu_{\lambda }}{2}\) this excludes constant solutions. When \(\lambda \in (0,1)\), we instead use that $\varphi = 0$ and $\varphi = \nu_{\lambda}-1$ are the only constant solutions to \eqref{steadyW}. The first case would force \(\nu_\lambda = 0\) which again would be a contradiction to \eqref{th8_2}. The second case would instead yield 
\[
\lambda \frac{\nu_\lambda}{2} = \nu_\lambda - 1,
\]
which is impossible for \(\lambda \in (0,1]\), \(\nu_\lambda \in [0,1]\). Then clearly $\phi_{\lambda}$ is nonconstant, which combined with monotonicity gives that it is also nonperiodic.
 
To show that $\phi_{\lambda}$ is strictly increasing on $(-\infty,0)$ is direct. Due to evenness of both $K$ and $\phi_{\lambda}$,  one has
\begin{align}
\label{th8_3}
(L\phi_{\lambda})(x+h) &- (L\phi_{\lambda})(x-h) \nonumber \\ &= \int_{-\infty}^0 (K(y-x)-K(y+x))(\phi_{\lambda}(y+h) - \phi_{\lambda}(y-h)) \, \rd y. 
\end{align}
For $x<0$ and $h>0$, the first factor of the integrand is 
positive while the second one is nonnegative, and since $\phi_{\lambda}$ is nonconstant, we conclude that 
$(L\phi_{\lambda})(x+h) > (L\phi_{\lambda})(x-h)$ whenever $x$, $h$ are chosen as above. From \eqref{steadyW} we have 
\begin{equation}
\label{th8_4}
L\phi_{\lambda}(x) - L\phi_{\lambda}(y) = (\nu_{\lambda}- \phi_{\lambda}(x) - \phi_{\lambda}(y)) (\phi_{\lambda}(x)- \phi_{\lambda}(y))
\end{equation}
and using $\phi_{\lambda}\leq \frac{\lambda\nu_{\lambda}}{2}\leq \frac{\nu_{\lambda}}{2}$, we see that the first expression on the right-hand side is nonnegative with equality only when $\phi_{\lambda}(x) = \phi_{\lambda}(y) = \frac{\nu_{\lambda}}{2}$. Since 
we already verified that $L \phi_{\lambda}$ is strictly increasing on $(-\infty, 0)$, the fact that $\phi_{\lambda}$ 
is nondecreasing together with \eqref{th8_4} imply that $\phi_{\lambda}$ is indeed strictly increasing on 
$(-\infty,0)$ with strict maximum at \(\phi_\lambda(0) = \lambda \nu_\lambda/2\).

A proof for that  \(\phi_{\lambda}\) is smooth on \(\R\) when \(\lambda \in (0,1)\) and $\nu_\lambda >0$, can be found in \cite[Theorem 5.1]{EW19}. To see that $\phi_{\lambda}$ has an everywhere positive derivative on $(-\infty,0)$, differentiate or apply Fatou's lemma (for the case \(\lambda = 1\)) to 
\begin{equation*}
(\nu_{\lambda}-2\phi_{\lambda}(x)) \phi_{\lambda}'(x) = \lim_{h\rightarrow 0} \frac{ (L\phi_{\lambda})(x+h) - 
(L\phi_{\lambda})(x-h)}{2h}
\end{equation*}
and use \eqref{th8_3} to get 
\begin{equation*}
(\nu_{\lambda}-2\phi_{\lambda}(x)) \phi_{\lambda}'(x) \geq \int_{-\infty}^0 (K(y-x) - K(y+x)) \, \phi_{\lambda}'(y) 
\, \rd y. 
\end{equation*}
Since \(K(y-x) - K(y+x) >0\) here, and $\phi_{\lambda}$ is nonconstant, this enforces 
\begin{equation*}
(\nu_{\lambda}-2\phi_{\lambda}(x)) \phi_{\lambda}'(x)  > 0 
\end{equation*}
on the negative half-line and therefore $\phi_{\lambda}' >0$ on the same interval. 
\end{proof}

As it turns out, the obtained solutions are \emph{not} solitary waves, but decay to the non-zero line of trivial solutions in the Whitham equation, namely \(\phi \equiv \nu -1\). The following corollary establishes that and excludes the case \(\nu_\lambda = 1\), to obtain the final waves through a transformation.

\begin{corollary}\label{cor:galilean} 
Given $\lambda \in (0,1]$, let  $(\phi_{\lambda}, \nu_{\lambda})$ be a limiting solution as in Theorem~\ref{thm:main}. Then the Galilean transformation
\begin{equation}\label{eq:galilean}
\varphi_\lambda = \phi_{\lambda} + 1 - \nu_{\lambda}, \qquad  \mu_{\lambda} = 2- \nu_{\lambda}
\end{equation}
defines a positive, bounded, even and nonconstant solution of \eqref{steadyW}, which is strictly decreasing on $(0,\infty)$. The resulting solitary wave has supercritical wave speed $\mu_{\lambda} \in (1,2)$, and satisfies 
\[
0 <  \varphi_{\lambda} \leq  \mu_\lambda -1 + \lambda \left(1-\frac{\mu_{\lambda}}{2} \right), 
\]
with equality at \(x = 0\).  Moreover, 
\[
\lim_{x\rightarrow \pm \infty} \varphi_{\lambda}(x)= 0,
\]
\[
e^{\eta \vert \cdot \vert} \, 
 \varphi_{\lambda}  \in L^1(\R) \cap L^{\infty}(\R),
\] 
for some \(\eta = \eta(\lambda) > 0\). The solution is everywhere smooth for \(\lambda \in (0,1)\) while for 
\(\lambda =1\)  the wave of maximal wave height $\varphi_1$ is  \(C^{1/2}\) and smooth on 
 \(\R\setminus\{0\}\). 
\end{corollary}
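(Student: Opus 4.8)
The plan is to verify first that the transformation \eqref{eq:galilean} produces a solution, then transfer the qualitative properties from \(\phi_\lambda\), pin the wave speed down to \((1,2)\), identify the correct (negative) tails of \(\phi_\lambda\) so that \(\varphi_\lambda\) vanishes at infinity, and finally import exponential decay and regularity from known results. Since \(\Vert K\Vert_{L^1(\R)}=1\) and \(K\geq 0\), the operator \(L\) fixes constants, so \(L(\phi_\lambda+1-\nu_\lambda)=L\phi_\lambda+(1-\nu_\lambda)\). Substituting \(\varphi_\lambda=\phi_\lambda+1-\nu_\lambda\) and \(\mu_\lambda=2-\nu_\lambda\) into \eqref{steadyW} and using \(L\phi_\lambda=\nu_\lambda\phi_\lambda-\phi_\lambda^2\) from Theorem~\ref{thm:main}, the expression reduces to an identity; the choice \(\mu_\lambda=2-\nu_\lambda\) is exactly what cancels both the term linear in \(\phi_\lambda\) and the constant term. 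Boundedness, evenness and nonconstancy pass to \(\varphi_\lambda\) as it is a vertical translate of \(\phi_\lambda\), and strict decay on \((0,\infty)\) follows from the strict increase of \(\phi_\lambda\) on \((-\infty,0)\) together with evenness.

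Next I would show \(\mu_\lambda\in(1,2)\), equivalently \(\nu_\lambda\in(0,1)\). By Theorem~\ref{thm:main} we know \(\nu_\lambda\in[0,1]\), so only the endpoints must be excluded. If \(\nu_\lambda=1\), the bound \(\nu_\lambda-1\leq\phi_\lambda\) makes \(\phi_\lambda\) nonnegative with existing limits, so Corollary~\ref{cor5} (only vanishing solutions have unit speed) forces \(\phi_\lambda\equiv 0\), contradicting nonconstancy; hence \(\mu_\lambda>1\). If \(\nu_\lambda=0\), then \(\phi_\lambda(0)=\tfrac{\lambda\nu_\lambda}{2}=0\) and \(\phi_\lambda\leq 0\), so evaluating \eqref{steadyW} at the origin gives \(L\phi_\lambda(0)=0\); but \(L\phi_\lambda(0)=\int_{\R}K\phi_\lambda<0\) since \(K>0\) and \(\phi_\lambda\leq 0\) with \(\phi_\lambda\not\equiv 0\), a contradiction. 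Thus \(\mu_\lambda<2\), and \(\mu_\lambda\in(1,2)\).

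I would then identify the tails. As \(\phi_\lambda\) is monotone and bounded, \(\Phi:=\lim_{x\to-\infty}\phi_\lambda(x)\) exists and, by Proposition~\ref{prop1}, equals \(0\) or \(\nu_\lambda-1\). The crux is to rule out \(\Phi=0\): in that case monotonicity forces \(\phi_\lambda\geq 0\), so Corollary~\ref{cor5} applies and yields \(\int_{\R}\psi\ast(\phi_\lambda((\nu_\lambda-1)-\phi_\lambda))=0\) for every nonzero \(\psi\geq 0\); since \(\nu_\lambda-1<0\leq\phi_\lambda\) the integrand is \(\leq 0\), and being a convolution against \(\psi\geq 0\) with vanishing integral it must vanish identically, which upon letting \(\psi\) run through an approximate identity forces \(\phi_\lambda\equiv 0\), impossible. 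Hence \(\Phi=\nu_\lambda-1\), so \(\varphi_\lambda\to 0\) at \(-\infty\) and, by evenness, at \(+\infty\). Strict monotonicity gives \(\phi_\lambda>\nu_\lambda-1\) on all of \(\R\), whence \(\varphi_\lambda>0\); the upper bound \(\varphi_\lambda\leq\varphi_\lambda(0)=\tfrac{\lambda\nu_\lambda}{2}+1-\nu_\lambda\), rewritten via \(\nu_\lambda=2-\mu_\lambda\), is precisely \(\mu_\lambda-1+\lambda(1-\tfrac{\mu_\lambda}{2})\), with equality at the crest.

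Finally I would address decay and regularity. For exponential decay I would write the equation as \(\varphi_\lambda(\mu_\lambda-\varphi_\lambda)=K\ast\varphi_\lambda\) and use that \(\mu_\lambda-\varphi_\lambda\geq\tfrac{\mu_\lambda}{2}>\tfrac12\) for large \(|x|\) together with the exponential bound on \(K\) from Lemma~\ref{propertiesWK}(i) to bootstrap the decay; this is exactly the content of \cite{BEP16}, which I would cite for \(e^{\eta|\cdot|}\varphi_\lambda\in L^1(\R)\cap L^{\infty}(\R)\). Smoothness for \(\lambda\in(0,1)\) is inherited from \(\phi_\lambda\in C^{\infty}(\R)\) in Theorem~\ref{thm:main} (valid since \(\nu_\lambda>0\)), while for \(\lambda=1\) the estimate \(\phi_1(x)\leq\tfrac{\nu_1}{2}-\delta|x|^{1/2}\) from Theorem~\ref{thm:main} isolates the crest as the only singular point and supplies the optimal \(C^{1/2}\) modulus there, with smoothness on \(\R\setminus\{0\}\) as in \cite{EW19}. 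The main obstacle is the tail identification: a priori the limiting procedure only yields a solution with \emph{some} constant tail, and because the periodic approximants are sign-changing both \(\Phi=0\) and \(\Phi=\nu_\lambda-1\) are conceivable; it is exactly Corollary~\ref{cor5}, invoked in the degenerate \(\Phi=0\) configuration, that forces the limit to level off at the negative depth \(\nu_\lambda-1\) and thereby makes \eqref{eq:galilean} produce a genuine solitary wave.
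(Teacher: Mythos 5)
Your proof is correct and follows the same overall skeleton as the paper's: Proposition~\ref{prop1} to restrict the possible tails, Corollary~\ref{cor5} to kill the degenerate configurations, the Galilean shift, \cite{BEP16} for exponential decay and \cite{EW19} for regularity. Two local steps differ, both legitimately. First, you exclude \(\nu_\lambda=0\) (equivalently \(\mu_\lambda=2\)) by evaluating \eqref{steadyW} at the crest, where \(\phi_\lambda(0)=0\) forces \(L\phi_\lambda(0)=0\) while \(\phi_\lambda\le 0\) nonconstant gives \(L\phi_\lambda(0)<0\); the paper instead runs Corollary~\ref{cor5} on the transformed wave at \(\mu_\lambda=2\) and reads off that \(\varphi_\lambda(1-\varphi_\lambda)\) cannot vanish identically. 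Your version is more elementary and avoids the transform at that stage. Second, in ruling out the zero tail you use the full integral identity of Corollary~\ref{cor5} together with the sign of \(\phi_\lambda\bigl((\nu_\lambda-1)-\phi_\lambda\bigr)\le 0\) (available since you have already pinned \(\nu_\lambda<1\)), whereas the paper first invokes Proposition~\ref{prop2} to force \(\nu_\lambda=1\) and only then applies the ``unit speed'' special case of Corollary~\ref{cor5}. Your ordering — wave speed first, tails second — thus bypasses Proposition~\ref{prop2} entirely, and there is no circularity since neither endpoint exclusion uses the tail value. The paper's route keeps Proposition~\ref{prop2} in play, which it needs anyway as a standalone a priori bound; your route buys a slightly leaner proof of this particular corollary. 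All remaining claims (positivity from strict monotonicity toward the limit \(\nu_\lambda-1\), the explicit upper bound with equality at \(x=0\), and the regularity dichotomy in \(\lambda\)) are handled as in the paper.
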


\begin{proof}
Since, according to Theorem \ref{thm:main}, $\phi_{\lambda}$ is even, monotone on $(-\infty,0)$ and bounded, the limits $\lim_{x\rightarrow \pm \infty} \phi_{\lambda}(x)$ exist and, due to Proposition \ref{prop1}, are  equal to $0$ or $\nu_{\lambda}-1$. We rule out the first case.

If $\lim_{x\rightarrow \pm \infty} \phi_{\lambda}(x) = 0$, then by strict monotonicity, \(0 < \phi_\lambda \leq \lambda \nu_\lambda/2\). By the bounds for the wave speed given in Proposition~\ref{prop2} and Theorem~\ref{thm:main}, we must have \(\nu_\lambda = 1\). But, as \(\phi_\lambda\) is bounded, Corollary~\ref{cor5} forces \(\phi_\lambda = 0\), which is a contradiction.

Therefore,
\[
\lim_{x\rightarrow \pm \infty} \phi_{\lambda}(x) = \nu_\lambda -1 < 0,
\]
and we can apply the Galilean transformation \eqref{eq:galilean} to obtain a positive solution with wave speed \(\mu_\lambda \in [1,2]\). Again, by Corollary~\ref{cor5}, the case \(\mu_\lambda =1\) is excluded, and we have a solution with $\mu_{\lambda} > 1$  and $\lim_{x\rightarrow \pm \infty} \varphi_{\lambda}(x)= 0$.  Hence we can invoke the a priori results from \cite[Proposition 3.13]{BEP16}, to conclude exponential decay  
\[
e^{\eta \vert \cdot \vert} \, 
 \varphi_{\lambda}  \in L^1(\R) \cap L^{\infty}(\R)
 \]
 of the solutions. Here, \(\eta > 0\) is a decay factor depending on the wave speed in a non-trivial way, see \cite{BEP16}. The case $\mu_\lambda =2$ is excluded as well, as Corollary~\ref{cor5} then would enforce
	\[
	\int_{\R} \psi \ast (\varphi_\lambda (1-\varphi_\lambda)) \, \rd x=0, 
	\]
for an abritrary positive test function \(\psi\). But $\varphi_\lambda \equiv 0$ and $\varphi_\lambda \equiv 1$ are both contradictions to $\varphi_\lambda$ being nonconstant, so  $\mu_\lambda <2$. The smoothness on \(\R\setminus\{0\}\)  follows from \cite[Theorem 5.1]{EW19} and by \cite[Theorem 5.4]{EW19} we get \(C^{1/2}\)-regularity exactly for  $\lambda =1$. When \(\lambda \in (0,1)\),  the Galilean transformation yields $\nu_{\lambda}>0$, and by Theorem~\ref{thm:main} we then obtain that $\phi_{\lambda}$ and $\varphi_{\lambda}$ are smooth on $\R$. 
\end{proof}

We present finally three results that give additional information about the solutions, the first of which is similar to the periodic result in \cite{BP21}, although the proof is different. Proposition~\ref{prop:touching} is for general solutions, and adds the point (ii). Unfortunately, we have not been able to rule out the case of two general solutions of the same wave height or speed, but see Proposition~\ref{prop:injective} for injectivity of the solutions constructed in this paper.

\begin{proposition}[improved touching lemma]\label{prop:touching}
Let \((\varphi, \mu)\) and \((\tilde\varphi, \tilde \mu)\) be two solution pairs of the steady Whitham equation \eqref{steadyW} with \(\mu, \tilde \mu \in (1,2)\) and \(\varphi, \tilde \varphi\) positive solutions with images in \((0,\frac{\mu}{2}]\). By symmetry, assume \(\mu \geq \tilde \mu\). 
\begin{itemize}
\item[(i)] If \(\tilde \varphi(x) \geq \varphi(x)\) for all \(x \in \R\) with equality at some point, then \(\tilde\varphi \equiv \varphi\) are identical.
\item[(ii)] If \(\tilde\varphi - \varphi\) is sign-shifting, its minimum is obtained where \(\mu - \tilde\varphi - \varphi < 1\).
\end{itemize}
\end{proposition}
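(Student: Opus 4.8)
My plan is to recast \eqref{steadyW} for each pair as the pointwise relations $L\varphi = \varphi(\mu-\varphi)$ and $L\tilde\varphi = \tilde\varphi(\tilde\mu-\tilde\varphi)$, set $w := \tilde\varphi - \varphi$, and run a nonlocal maximum-principle argument built on the positivity of the kernel. Subtracting the two equations and using $\tilde\varphi^2 - \varphi^2 = (\tilde\varphi+\varphi)\,w$ together with $\tilde\mu\tilde\varphi - \mu\varphi = \tilde\mu w - (\mu-\tilde\mu)\varphi$, I obtain the two equivalent identities
\begin{equation*}
Lw = w\big[\tilde\mu - \tilde\varphi - \varphi\big] - (\mu-\tilde\mu)\varphi = w\big[\mu - \tilde\varphi - \varphi\big] - (\mu-\tilde\mu)\tilde\varphi,
\end{equation*}
the second obtained from the first by writing $w\tilde\mu = w\mu - (\mu-\tilde\mu)w$ and collecting $-(\mu-\tilde\mu)(w+\varphi) = -(\mu-\tilde\mu)\tilde\varphi$, since $w+\varphi=\tilde\varphi$. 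In both forms the correction term $-(\mu-\tilde\mu)(\cdot)$ is nonpositive, because $\mu\geq\tilde\mu$ and $\varphi,\tilde\varphi>0$, and throughout I will use that $K>0$ with $\Vert K\Vert_{L^1(\R)}=1$ from Lemma~\ref{propertiesWK}.

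For part (i), assume $w\geq 0$ on $\R$ with $w(x_0)=0$. Evaluating the first identity at $x_0$ annihilates the bracket term and leaves $Lw(x_0) = -(\mu-\tilde\mu)\varphi(x_0)\leq 0$. On the other hand $Lw(x_0)=\int_\R K(x_0-y)\,w(y)\,\rd y\geq 0$, since the integrand is nonnegative. Hence $Lw(x_0)=0$; as $\varphi(x_0)>0$ this forces $\mu=\tilde\mu$, and a vanishing convolution of the strictly positive kernel against the nonnegative continuous function $w$ then forces $w\equiv 0$, that is $\tilde\varphi\equiv\varphi$. I expect this part to be immediate.

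For part (ii), suppose $w$ is sign-shifting. Since the waves decay at infinity, $w\to 0$ as $x\to\pm\infty$, so $w$ attains a \emph{negative} global minimum $w(x_m)=\min_\R w<0$ at some finite point. Because $w\geq w(x_m)$ everywhere, with strict inequality on a set of positive measure (as $w$ also takes positive values), and $K>0$, I obtain $Lw(x_m)\geq w(x_m)\Vert K\Vert_{L^1(\R)}=w(x_m)$ with strict inequality, that is $Lw(x_m) > w(x_m)$. Inserting this into the second identity at $x_m$ gives
\begin{equation*}
w(x_m)\big[\mu - \tilde\varphi(x_m) - \varphi(x_m) - 1\big] > (\mu-\tilde\mu)\,\tilde\varphi(x_m) \geq 0.
\end{equation*}
As $w(x_m)<0$, the bracket must be strictly negative, which is precisely $\mu - \tilde\varphi(x_m) - \varphi(x_m) < 1$, the desired conclusion.

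The argument is a clean nonlocal maximum principle, so I do not anticipate a deep obstacle; the two points needing care are the attainment of the global minimum in (ii) — which is where the solitary (decay) property enters, guaranteeing that the infimum is negative and realised at a finite point rather than merely approached at infinity — and securing the \emph{strict} inequality $Lw(x_m)>w(x_m)$, which relies on $w$ being nonconstant (here, sign-shifting) together with $K>0$ almost everywhere. Once these are in place, the whole of (ii) reduces to the sign bookkeeping of dividing through by the negative quantity $w(x_m)$, and (i) is the corresponding statement evaluated at a zero of $w$.
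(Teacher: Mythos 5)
Your proof is correct and takes essentially the same route as the paper: both subtract the two steady equations to obtain the identity \(Lw = (\mu-\varphi-\tilde\varphi)\,w - (\mu-\tilde\mu)\tilde\varphi\) and then exploit \(K>0\) together with \(\Vert K\Vert_{L^1(\R)}=1\), evaluating at the touching point for (i) and at the global minimum for (ii). Your direct derivation of \(c(x_m)<1\) from the strict inequality \(Lw(x_m)>w(x_m)\) is merely a cleaner packaging of the paper's ``subtract \(\psi_{\min}\) and localise to intervals'' step, and, like the paper, you rely on the solitary decay \(w\to 0\) at infinity to ensure the negative minimum is attained.
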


\noindent Note that for the solutions constructed in this paper, Corollary~\ref{cor:galilean} implies that the quantity \(c(x) = \mu - \varphi(x) - \tilde\varphi(x)\) used in the proof of Proposition~\ref{prop:touching}~(ii) will satsify
\[
c(0) = \mu - 2\left( \mu - 1 + \lambda\left(1-\frac{\mu}{2}\right)\right) = (1-\lambda)(2-\mu),
\]
when \(\mu = \tilde \mu\). Because \(\lambda \in (0,1)\) and \(\mu = \mu_\lambda \in (1,2)\), the case of \((ii)\) is not excluded. As the waves are solitary, one however has \(c(x) > 1\) for all \(|x| \geq x_\lambda\) for some \(x_\lambda\).

\begin{proof}
As \(-\mu\varphi + \varphi^2 + L \varphi = 0\) and  \(-\tilde\mu\tilde\varphi + \tilde\varphi^2 + L \tilde\varphi = 0\), we have similar to \eqref{th8_1} that 
\begin{equation}\label{eq:two phi}
(\mu - \varphi - \tilde\varphi) (\tilde \varphi - \varphi) + (\tilde\mu - \mu)\tilde\varphi = L(\tilde\varphi - \varphi).
\end{equation}
Let \(\psi = \tilde \varphi - \varphi\) and \(c(x) = \mu - \varphi(x) - \tilde\varphi(x) \geq 0\). When \(\varphi\) and \(\tilde\varphi\) are bell-shaped solitary waves, \(x \mapsto c(x)\) is monotone on a half-line with minimum at \(x = 0\). From the equation \eqref{eq:two phi} and  \(\mu \geq \tilde \mu\) we obtain
\begin{align*}
c(x) \psi(x)  \geq c(x) \psi(x) + (\tilde\mu - \mu)\tilde\varphi(x) = L \psi(x).
\end{align*}
Assume now further that  \(\psi(x) = \tilde \varphi(x) - \varphi(x) \geq 0\) for all \(x \in \R\) as in the case (i). Then
\begin{align*}
c(x)\psi(x) &\geq  \int_\R K(x-y) \psi(y) \, \rd y\\
&\geq   \int_{x_1}^{x_2} K(x-y) \psi(y) \, \rd y\\
&\geq   \min_{[x_1,x_2]} \psi \int_{x_1}^{x_2} K(x-y) \, \rd y,
\end{align*}
for any \(x, x_1, x_2 \in \R\) with $x_1\leq x_2$. If now \(\psi(x) = 0\) at some point, fix that point. As \(K\) is strictly positive, we may choose the interval \([x_1, x_2]\) arbitrarily, yielding \(\min_{[x_1,x_2]} \psi=0\) for all intervals, and thus by continuity that \(\psi \equiv 0\).

In the more general case (ii), let \(\psi_\text{min} = \min_{\R} \psi\). By using again that \(\mu \geq \tilde \mu\) and adding and subtracting \(c(x) \psi_\text{min}\), one then obtains
\begin{align*}
c(x)(\psi(x) - \psi_\text{min})  + c(x) \psi_\text{min} &\geq  \int_\R K(x-y) (\psi(y) - \psi_\text{min}) \, \rd y + \int_\R K(x-y) \psi_\text{min} \, \rd y\\
&=\int_\R K(x-y) (\psi(y) - \psi_\text{min}) \, \rd y +  \psi_\text{min},
\end{align*}
and consequently
\begin{align*}
c(x)(\psi(x) - \psi_\text{min})  + (c(x) -1)\psi_\text{min} &\geq   \int_\R K(x-y) (\psi(y) - \psi_\text{min}) \, \rd y\\
&\geq   \int_{x_1}^{x_2} K(x-y) (\psi(y) - \psi_\text{min}) \, \rd y\\
&\geq   \min_{[x_1,x_2]}(\psi - \psi_\text{min}) \int_{x_1}^{x_2} K(x-y) \, \rd y.
\end{align*}
If \(c(x) \geq 1\) is realised where \(\psi(x) = \psi_\text{min}\), the same argument as above yields \(\psi \equiv \psi_\text{min}\) and then by \(\lim_{|x| \to \infty} \psi(x) = 0\), that \(\psi \equiv 0\). So the only other possibility is that \(c(x) < 1\) at the point of the \(\min_\R \psi\) (which has to be reasonable close to \(x = 0\), see the remark after Proposition~\ref{prop:touching}).
\end{proof}

\begin{proposition}\label{prop:injective}
The branch of solitary solutions from Corollary~\ref{cor:galilean} is injective as a mapping \(\lambda \mapsto (\varphi_\lambda, \mu_\lambda) \in C(\R) \times \R\).
\end{proposition}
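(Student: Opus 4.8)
The plan is to recover the parameter $\lambda$ from the solution pair $(\varphi_\lambda, \mu_\lambda)$ directly, using the explicit crest height supplied by Corollary~\ref{cor:galilean}. Recall from that corollary that the maximum of each wave is attained at the origin and, since the stated upper bound holds with equality at $x = 0$, satisfies
\[
\varphi_\lambda(0) = \mu_\lambda - 1 + \lambda\left(1 - \frac{\mu_\lambda}{2}\right).
\]
The key observation is that, for a fixed wave speed, this identity is affine in $\lambda$ with slope $1 - \mu_\lambda/2$, which is strictly positive because $\mu_\lambda \in (1,2)$. Thus the crest height determines $\lambda$ uniquely once $\mu_\lambda$ is known.

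To establish injectivity I would argue by contradiction. Suppose $\lambda_1, \lambda_2 \in (0,1]$ produce the same pair, that is $(\varphi_{\lambda_1}, \mu_{\lambda_1}) = (\varphi_{\lambda_2}, \mu_{\lambda_2})$ in $C(\R) \times \R$. Then the wave speeds coincide, $\mu := \mu_{\lambda_1} = \mu_{\lambda_2}$, and the two profiles agree pointwise, so in particular $\varphi_{\lambda_1}(0) = \varphi_{\lambda_2}(0)$. Inserting both into the crest identity and subtracting, the $\mu - 1$ terms cancel and one is left with
\[
0 = (\lambda_1 - \lambda_2)\left(1 - \frac{\mu}{2}\right).
\]
Since $\mu \in (1,2)$ gives $1 - \mu/2 \in (0, \tfrac12)$, the factor is nonzero, whence $\lambda_1 = \lambda_2$, contradicting the assumption and proving the claim.

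I do not expect a genuine obstacle here: all of the analytic work has already been carried out in establishing Corollary~\ref{cor:galilean}, and injectivity reduces to the elementary fact that the crest height is strictly monotone in $\lambda$ once the wave speed is fixed. The only input that truly requires the earlier theory is the strict bound $\mu_\lambda < 2$, which guarantees that the coefficient $1 - \mu_\lambda/2$ does not vanish; this in turn rested on Corollary~\ref{cor5} excluding the degenerate speed $\mu = 2$. It is worth emphasising that the argument does \emph{not} require the map $\lambda \mapsto \mu_\lambda$ to be injective on its own: even if two distinct parameters happened to share a wave speed, their crest heights---and therefore their profiles---would necessarily differ, so the full pair map remains injective.
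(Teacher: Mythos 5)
Your argument is correct and is essentially the paper's own proof: both recover $\lambda$ from the pair $(\varphi_\lambda(0),\mu_\lambda)$ via the affine crest-height identity from Corollary~\ref{cor:galilean}, with the only substantive input being $\mu_\lambda<2$ so that the coefficient $1-\mu_\lambda/2$ is nonzero. The paper phrases this by solving explicitly for $\lambda$ in terms of the relative height $\alpha$ and $\mu$, while you subtract two instances of the identity; these are the same computation.
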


\begin{proof}

Let \(\alpha = \frac{\varphi_\lambda(0)}{\mu_\lambda/2}\) denote the relative height of solutions from Corollary~\ref{cor:galilean}.  The parameter \(\lambda\) determines the relative height of the originally constructed solutions \(\phi_\lambda\) with wavespeed \(\nu_\lambda \in (0,1)\). As the height of the new solutions \(\varphi_\lambda\) is
\(
\varphi_\lambda(0) = \mu_\lambda -1 + \lambda \left(1-\frac{\mu_{\lambda}}{2} \right), 
\)
we need only solve 
\[
\alpha = \frac{\mu -1 + \lambda \left(1-\frac{\mu}{2} \right)}{\mu/2}
\]
for \(\lambda\) to obtain that 
\[
\lambda = \frac{2 - (2-\alpha)\mu}{2-\mu}.
\]
This means that two identical solutions on the curve enforce the same \(\lambda\), hence the mapping \(\lambda \mapsto (\alpha_\lambda, \mu_\lambda)\) injective, and thus also \(\lambda \mapsto (\varphi_\lambda, \mu_\lambda) \in C(\R) \times \R\).
\end{proof}

\begin{proposition}\label{prop:upperbound}
With \(\alpha = \frac{\varphi_\lambda(0)}{\mu_\lambda/2}\) the relative height of solutions from Corollary~\ref{cor:galilean}, the wave speed always satisfies 
\[
\mu_\lambda < \frac{2}{2- \alpha}.
\]
For \(\alpha\) close enough to \(1\) (highest wave), the wave speed is uniformly bounded away from \(\mu = 2\) by a positive constant depending only on the regularity estimate from Proposition~\ref{prop7}.
\end{proposition}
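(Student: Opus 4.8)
The statement splits into two parts, and I would treat them separately. For the first inequality I would simply invoke the explicit relation between the two relative heights derived in the proof of Proposition~\ref{prop:injective}, namely
\[
\lambda = \frac{2 - (2-\alpha)\mu_\lambda}{2 - \mu_\lambda}.
\]
Since Corollary~\ref{cor:galilean} guarantees \(\mu_\lambda \in (1,2)\), the denominator \(2 - \mu_\lambda\) is positive, and since \(\lambda \in (0,1]\) is strictly positive the numerator must be positive as well; as \(2 - \alpha \ge 1 > 0\), this rearranges to exactly \(\mu_\lambda < \tfrac{2}{2-\alpha}\). (The same bound will also drop out of the pointwise computation below, so I could alternatively keep the whole proof self-contained.)

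The second part is the substantial one, because as \(\alpha \to 1\) the bound \(\tfrac{2}{2-\alpha}\) degenerates to \(2\) and yields no uniform gap. To manufacture a gap I would exploit the square-root drop of the crest from Proposition~\ref{prop7}. First I would transfer that estimate through the Galilean transform \eqref{eq:galilean}: starting from the limiting inequality \eqref{th8_2}, \(\phi_\lambda(x) \le \tfrac{\nu_\lambda}{2} - \delta|x|^{1/2}\), and using \(\varphi_\lambda = \phi_\lambda + 1 - \nu_\lambda\) together with \(\tfrac{\mu_\lambda}{2} = 1 - \tfrac{\nu_\lambda}{2}\), one obtains
\[
\frac{\mu_\lambda}{2} - \varphi_\lambda(x) \ge \delta |x|^{1/2}, \qquad |x| \le \delta.
\]
Writing \(M = \varphi_\lambda(0) = \tfrac{\alpha \mu_\lambda}{2}\) and evaluating \eqref{steadyW} at \(x = 0\) (using evenness of \(K\) and \(\Vert K\Vert_{L^1(\R)} = 1\)) gives
\[
(\mu_\lambda - M)M = \int_\R K(y)\varphi_\lambda(y)\,\rd y = M - \int_\R K(y)(M - \varphi_\lambda(y))\,\rd y,
\]
so that \(\mu_\lambda = 1 + M - \tfrac1M \int_\R K(M - \varphi_\lambda)\,\rd y\). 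Because \(K > 0\) and \(\varphi_\lambda \le M\), the remaining integral is nonnegative, and already its strict positivity recovers the first inequality.

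To make this quantitative I would restrict the integral to the \emph{fixed} annulus \(\tfrac{\delta}{2} \le |y| \le \delta\), on which the transferred crest estimate gives
\[
M - \varphi_\lambda(y) \ge \delta|y|^{1/2} - \frac{\mu_\lambda}{2}(1-\alpha) \ge \frac{\delta^{3/2}}{\sqrt{2}} - (1-\alpha),
\]
using \(\mu_\lambda < 2\). Hence, once \(1 - \alpha \le \tfrac12 \delta^{3/2}/\sqrt{2}\), the integrand is bounded below by \(m_0 := \tfrac12 \delta^{3/2}/\sqrt{2}\) there, so that
\[
\int_\R K(M-\varphi_\lambda)\,\rd y \ge m_0 \int_{\delta/2 \le |y| \le \delta} K(y)\,\rd y =: c(\delta) > 0,
\]
a constant depending only on \(\delta\). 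Feeding this back and using \(M \le 1\) gives \(\mu_\lambda \le 1 + M - c(\delta)\); inserting \(M = \tfrac{\alpha \mu_\lambda}{2}\) and \(\tfrac{2}{2-\alpha} \le 2\) then yields \(\mu_\lambda \le 2 - 2c(\delta)\), with \(\kappa = 2c(\delta)\).

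The main obstacle is precisely the mismatch between the quantity controlled by Proposition~\ref{prop7}, which is \(\tfrac{\mu_\lambda}{2} - \varphi_\lambda\), and the quantity \(M - \varphi_\lambda\) that enters the equation at the crest, the two differing by \(\tfrac{\mu_\lambda}{2}(1-\alpha)\). For \(\alpha\) genuinely below \(1\) this gap is harmless, but near the highest wave it competes with the square-root drop; the resolution is to measure the deficit on a region \([\delta/2,\delta]\) that does \emph{not} shrink with \(1-\alpha\), where \(\delta|y|^{1/2}\) is bounded below by a fixed constant and hence dominates \(\tfrac{\mu_\lambda}{2}(1-\alpha)\) once \(\alpha\) is close enough to \(1\). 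I expect the only remaining points requiring care to be the justification of the pointwise identity at \(x = 0\) (immediate from \(\varphi_\lambda \in L^\infty(\R)\), \(K \in L^1(\R)\) and evenness) and the bookkeeping of the \(\delta\)-dependent constants, so that the final \(\kappa = 2c(\delta)\) depends only on the estimate of Proposition~\ref{prop7}, as claimed.
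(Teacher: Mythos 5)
Your proof is correct and follows essentially the same strategy as the paper: the first bound comes from evaluating the equation at the crest (you also note the equivalent route via the \(\lambda\)--\(\alpha\) relation from Proposition~\ref{prop:injective}), and the uniform gap near \(\alpha=1\) is obtained exactly as in the paper by feeding the square-root crest estimate of Proposition~\ref{prop7}, transferred through the Galilean transform, into the convolution identity at \(x=0\). Your fixed annulus \(\tfrac{\delta}{2}\le|y|\le\delta\) plays the role of the paper's integral over \(|y|\le\delta\), and your bookkeeping of the competing terms \(\delta|y|^{1/2}\) versus \(\tfrac{\mu}{2}(1-\alpha)\) is, if anything, slightly more transparent than the paper's display.
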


\noindent The bounds in Proposition~\ref{prop7} hold for all continuous, non-trivial, bell-shaped solutions of \eqref{steadyW} satisfying \(\varphi \leq \frac{\alpha\mu}{2}\) (there are no other solutions known of the Whitham equation). It is possible that one can could improve the upper bound for \(\mu\) using precise estimates for the kernel \(K\) and the optimal decay rate from \cite{BEP16, MR4324298} in combination with new optimal constants found in ongoing work \cite{EMV21}, but it is probably hard bordering on the impossible to determine an exact upper bound.

\begin{proof}
The trivial estimate 
\begin{align*}
\left( \mu - \varphi(0) \right) \varphi(0) &= L\varphi(0) = \int_\R K(y) \varphi(y)\, \rd y < \frac{\alpha\mu}{2},
\end{align*}
shows that \(\mu(1-\frac{\alpha}{2}) \frac{\alpha \mu}{2} <  \frac{\alpha\mu}{2}\), which directly yields \(\mu < \frac{2}{2-\alpha}\). We now use this estimate as well as the a priori estimate from Proposition~\ref{prop7}, writing 
\begin{align*}
\left( \mu - \varphi(0) \right) \varphi(0) &= L\varphi(0) = \int_\R K(y) \varphi(y)\, \rd y\\ 
&= \int_{|y| \leq \delta} K(y) \varphi(y)\, \rd y + \int_{|y| > \delta} K(y) \varphi(y)\, \rd y\\ 
&\leq \int_{|y| \leq \delta} K(y) \left( \frac{\mu}{2} - \delta |y|^{1/2} \right)\, \rd y + \int_{|y| > \delta} K(y) \frac{\alpha\mu}{2}\, \rd y\\ 
&\leq \frac{\alpha\mu}{2} + \left(\frac{1 - \alpha}{2-\alpha} - \delta\right) \int_{|y| \leq \delta} K(y) |y|^{1/2} \, \rd y.
\end{align*}
Because \(\delta\) is uniform for all solutions, considering \(\alpha\) close enough to \(1\) yields a negative last term, so that
\[
\mu\left(1-\frac{\alpha}{2}\right) \frac{\alpha \mu}{2} -  \frac{\alpha\mu}{2} \leq - C_\delta^2.
\]
This shows that \(\mu\) is absolutely bounded away from \(\frac{2}{2- \alpha}\) with a distance depending only on \(\delta\) for all such solutions.
\end{proof}

\section*{Acknowledgements}
\noindent The authors would like to thank E. Wahlén for drawing our attention to the investigation~\cite{BFRW97}.

%%%%%%%%%%%%%%%%
%%%%%%%%%%%%%%%%
\bibliographystyle{siam}
\bibliography{ENW}
%%%%%%%%%%%%%%%%
%%%%%%%%%%%%%%%%

%%%%%%%%%%%%%%%%
%%%%%%%%%%%%%%%%
\end{document}